\def\red{}
\let\uml\"
\title{A randomized greedy algorithm for piecewise linear motion planning}
\author{Carlos Ortiz\footnote{Supported by a Conacyt scholarship.}, Adriana Lara\footnote{Partially supported by Grant SIP20201381.}, Jes\'us Gonz\'alez\footnote{The third author thankfully acknowledge the computer resources, technical expertise and support provided by the Laboratorio Nacional de Supercómputo del Sureste de México, a CONACYT member of the network of national laboratories.}, and Ayse Borat}  
\date{}
\newcommand{\TC}{\mathrm{TC}}
\newcommand{\cat}{\mathrm{cat}}
\newcommand{\D}{\mathrm{D}}
\newcommand{\bsd}{\mathrm{Sd}}
\newcommand{\SC}{\mathrm{SC}}
\newcommand{\C}{\mathrm{SD}}
\newcommand{\card}{\mathrm{Card}}
\newtheorem{theo}{Theorem}[section]
\newtheorem{lemma}[theo]{Lemma}
\newtheorem{corollary}[theo]{Corollary}
\newtheorem{definition}[theo]{Definition}
\newtheorem{ejems}[theo]{Examples}
\newtheorem{remark}[theo]{Remark}
\begin{document}
\maketitle

\begin{abstract}
We describe and implement a randomized algorithm that inputs a polyhedron, thought of as the space of states of some automated guided vehicle $\mathcal{R}$, and outputs an explicit system of piecewise linear motion planners for $\mathcal{R}$. The algorithm is designed in such a way that the cardinality of the outputed system is probabilistically close (with parameters chosen by the user) to minimal possible. This yields the first automated solution for robust-to-noise robot motion planning in terms of simplicial complexity (SC) techniques, a discretization of Farber's topological complexity $\TC$. Besides its relevance toward technological applications, our work revels that, unlike other discrete approaches to TC, the SC model can recast Farber's invariant without having to introduce costly subdivisions. We develop and implement our algorithm by actually discretizing Mac\'ias-Virg\'os and Mosquera-Lois' notion of homotopic distance, thus encompassing computer estimations of other sectional category invariants as well, such as the Lusternik-Schnirelmann category of polyhedra. 
\end{abstract}

\noindent 
\emph{Keywords and phrases:} Abstract simplicial complex, barycentric subdivision, contiguity of simplicial maps, motion planning, randomized algorithm, homotopic distance.

\medskip\noindent
\emph{2010 Mathematics Subject Classification:} Primary: 55M30, 68T40, 68W20. Secondary: 55U10, 05E45.


\section{Introduction}\label{seccionintroductoria}
The notion of topological complexity (TC), introduced by Michael Farber in~\cite{F2}, is a mathematical model measuring the continuity instabilities in the motion planning problem of robots. The theoretical aspects of Farber's idea have been extensively studied by algebraic topologists through nearly twenty years. As a result, the concept has found deep and fruitful connections in homotopy theory. Nonetheless, the more computational aspects of the TC-ideas have seen limited developments, while actual engineering-minded TC-applications seem to be inexistent. The purpose of this paper is to help mend such a situation by focusing on the more applied features of Farber's TC. We take advantage of computational topology techniques and, more importantly, by using randomized algorithms, we settle a cumputer-based solution to the motion planning problem for autonomous systems under Farber's model. We implement our algorithms with successful and, as explained below, unexpected results.

We use the discretization of Farber's TC developed in~\cite{MR3778506}. Here is the basic idea (details are reviewed and generalized in the next section). Assume that the space of states of a given autonomous system is given by (the topological realization of) an abstract simplicial complex $K$ where, for practical reasons, we assume $K$ to be finite. Taking a linear order on the vertices of $K$, we consider the ordered simplicial product $K\times K$. In such a context, a \emph{piecewise linear motion planner} defined on a subcomplex $J$ of $K\times K$ is a chain of simplicial maps $\varphi_0,\varphi_1,\ldots,\varphi_c\colon J\to K$ ($c\geq 1$) satisfying that (i) each $\varphi_i$ is contiguous to the subsequent map $\varphi_{i+1}$ ($0\leq i<c$), and that (ii) $\varphi_0$ ($\varphi_c$) is the restriction to $J$ of the projection $K\times K\to K$ onto the first (second) factor. The rationale behind such a definition is that, for any given pair $(a,b)$ of initial-final points in the topological realization $\| J \|$ of $J$, the sequence of points $$a,\; \|\varphi_0\|(a,b),\;\|\varphi_1\|(a,b),\,\ldots\,,\;\|\varphi_c\|(a,b),\;b$$ flags a piecewise linear path in $\| K\|$ from $a$ to $b$ depending continuously on $a$ and $b$. A motion planner defined on the full complex $K\times K$ exists only in the (topologically trivial) case that $\| K\|$ is contractible. It is then natural to define $\SC_{\text{strict}}(K)$ as one less than the minimal cardinality of systems of piecewise linear motion planners whose domains cover $K\times K$. This yields a discretized approximation of Farber's TC in the sense that $\TC(\|K\|)\leq\SC_{\text{strict}}(K)$.

A central result in~\cite{MR3778506} (coming from the Simplicial Approximation Theorem) is that, if we want to have a discretized invariant of $K$ that actually recovers (rather than just approximates) Farber's topological complexity of the realization $\| K\|$, then the above construction has to be done within a limiting process where $K\times K$ is allowed to be ``sufficiently'' subdivided, for instance, by taking $b$-iterated barycentric subdivisions $\bsd^b(K\times K)$. This yields the \emph{simplicial complexity} (SC) of~$K$, $$\SC(K)=\lim_{b\to\infty}\SC^{\hspace{.25mm}b}_{\text{strict}}(K),$$ where $\SC^{\hspace{.25mm}b}_{\text{strict}}(K)$ is defined in terms of $\bsd^b(K\times K)$ in the same way as $\SC_{\text{strict}}(K)$ is defined in terms of $K\times K$ (so $\SC^0_{\text{strict}}(K)=\SC_{\text{strict}}(K)$). Then, as shown in \cite[Theorem~3.5]{MR3778506},
\begin{equation}\label{mtr}
\SC^0_{\text{strict}}(K\times K)\geq \SC^1_{\text{strict}}(K\times K)\geq\SC^2_{\text{strict}}(K\times K)\geq\cdots\geq\SC(K)=\TC(\|K\|),
\end{equation}
recovering Farber's topological complexity of $\| K\|$. Our main theoretical result, Theorem~\ref{sc=tc} below, is a generalization of~(\ref{mtr}). This is achieved by introducing the notion of contiguity distance between simplicial maps (Definition~\ref{condis}), a discretization of the concept of homotopic distance studied recently in~\cite{HD}.

\begin{remark}\label{notosubdivide}{\em
The point above is that, by taking iterated barycentric subdivisions, we can recast Farber's invariant by means of a finite model. However, any person familiar with the computational aspects involving subdivisions will immediately realize that the proposal based on $b$-iterated barycentric subdivisions becomes computationally prohibitive as $b$ grows. Thus, in principle, the use of SC to evaluate TC carries a compromise between how close we want to approximate $\TC$ versus how long we are willing to wait for a computer to perform a calculation involving a large number of subdivisions. The following considerations explain that, in some cases, such compromise can be minimized without diminishing the effectiveness of the approach.
}\end{remark}

An initial way to soften the compromise above is to allow for coarser/partial subdivisions. The option is however still not satisfactory, as there is no indication \emph{a priori} of what portions of $K\times K$ (and how much) should be subdivided. With the aid of human geometric intuition, the coarser subdivision alternative was used in~\cite[Section~4]{MR3778506} to recast, after a single subdivision, the fact that the topological complexity of the circle (the natural benchmark in~\cite{MR3778506}) is 1, namely, 
\begin{equation}\label{goalinicial}
\SC^{\hspace{.2mm}b}_{\text{strict}}(\partial\Delta^2)=1\mbox{, \ for $b\geq1$.}
\end{equation}

With these preliminaries, we are ready to describe the main application-minded achievements of this paper. Firstly, we lay the grounds for a randomized algorithm that is able to perform, without human aid, evaluation tasks such as~(\ref{goalinicial}). Secondly, and more relevant for actual applications, we set a corresponding computer implementation which, besides of performing automated TC estimations, produces probabilistically close-to-optimal systems of piecewise linear motion planners. Optimality here means that, in each inputed case, the cardinality of the outputed system is close to been minimal possible (i.e., close to the TC value of the complex under consideration), where closeness is controlled by means of user-given parameters in the probabilistic algorithm. In brief, our implementation outputs robust-to-noise system of piecewise linear motion planners, with a reliability that depends on the given parameters.

As a byproduct, we find somehow surprisingly that the subdivision ingredient implicit in~(\ref{mtr}) can be unnecessary in a SC-recasting of Farber's TC, thus nullifying the critical compromise noted in Remark~\ref{notosubdivide}. This is a crucial feature for eventual technological TC-applications. For instance, running on a laptop computer, our algorithm finds in a bit less than a minute a system of piecewise linear motion planners giving in fact
\begin{equation}\label{sinsubdividir}
\SC^{\hspace{.2mm}0}_{\text{strict}}(\partial\Delta^2)=1.
\end{equation}
This should be compared to the fact that, starting with the barycentric subdivision $\bsd^1(\partial\Delta^2\times \partial\Delta^2)$, our computer implementation finds in about 10 hours running time a system of piecewise linear motion planners recovering the weaker fact~(\ref{goalinicial}). The reported running times (obtained on the same laptop computer) are given for comparison purposes: the computation is about 600 times (unnecessarily) more complex by allowing a single subdivision. Naturally, the use of powerful computational clusters results in significant time reductions, thus allowing to replace the circle $\partial\Delta^2$ by complexes arising from actual applications. The moral is that, in general, an SC-approach to motion planning applications tends to perform satisfactorily well by ignoring the subdivision component. Actual applications of the SC-ideas to concrete problems by using powerful computer clusters will be addressed in a future publication.

\begin{remark}{\em
The ability to recast Farber's TC in discrete terms without the need of subdivisions seems to be a convenient advantage of our SC-model over other discrete approaches. Specifically, the \emph{discrete topological complexity} introduced in~\cite{MR3834677} of a simplicial complex whose geometric realization is a circle turns out to be one unit larger than $\TC(S^1)$~\cite[Theorem~5.6]{MR3834677}. Likewise, (if no subdivisions are allowed) the value of Tanaka's \emph{combinatorial complexity} on the minimal finite space model of a circle is two units higher than $\TC(S^1)$~(\cite[Example~3.7]{MR3773738}).
}\end{remark}

The first part of the paper is devoted to reviewing the basic tools (Section~\ref{secback}) and developping the theoretical basis (Section~\ref{CD}) that show that our model recasts homotopy notions such as Farber's topological complexity and Lusternik-Schnirelmann's category. This is done via Mac\'ias-Virg\'os and Mosquera-Lois' homotopic distance, a concept whose discretization is achieved by means of Theorem~\ref{sc=tc}. The second part of the paper has an applied focus: Section~\ref{sda} describes the main processes in our algorithm, while Section~\ref{secndeexpmts} illustrates, via experimentation, the power that the technique has even when the subdivision ingredient is omitted.

\section{Background}\label{secback}
\subsection{Topological complexity via homotopic distance}
Let $P(X)$ stand for the free path space on a topological space $X$. Farber's topological complexity of~$X$, $\TC(X)$, is the sectional category of the end-point evaluation map $e\colon P(X)\to X\times X$, i.e., the fibration taking a free path $\gamma\in P(X)$ to the pair $(\gamma(0),\gamma(1))$. In other words, $\TC(X)+1$ is the smallest cardinality of open covers $\{U_i\}_i$ of $X\times X$ so that $e$ admits a continuous section $\sigma_i$ on each $U_i$. Note that we use reduced terms, so that a contractible space has zero topological complexity. The open sets $U_i$ in such an open cover are called {\it local domains}, the corresponding sections $\sigma_i$ are called {\it local motion planners}, and the family of pairs $\{(U_i,\sigma_i)\}$ is called a {\it system of local motion planners} for $X$. A system of local motion planners is said to be optimal if it has $\TC(X)+1$ local domains. In view of the continuity requirement on local rules, an optimal motion planner minimizes the possibility of accidents in the performance of a robot moving in a noisy environment.  

When $X$ is the realization of an abstract simplicial complex, the openness requirement on local domains can be replaced, without altering the numerical value of $\TC(X)$, by requiring that local domains are subdivided subcomplexes (see~\cite{MR3778506}, or the adaptation to the simplicial analogue of homotopic distance in the proof of Theorem~\ref{sc=tc} below). Furthermore, a local motion planner is really a (local) homotopy in disguise:

\begin{lemma}[{\cite[Lemma 4.21]{F}}]\label{motivation} The evaluation map $e\colon P(X)\to X\times X$ admits a section on a subset $A$ of $X\times X$ if and only if the restrictions to $A$ of the two projections to the axes $\pi_i\colon X\times X \to X$, $i=1,2$, are homotopic.
\end{lemma}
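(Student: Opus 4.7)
The plan is to establish the equivalence via the standard exponential correspondence between maps out of $A\times[0,1]$ and maps into the path space $P(X)=X^{[0,1]}$ (equipped with the compact-open topology). Each direction amounts to transposing the two variables $(a,b)\in A$ and $t\in[0,1]$, and then checking that the boundary conditions at $t=0,1$ say precisely that the two projections agree with a section of $e$.

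For the forward direction, suppose $\sigma\colon A\to P(X)$ is a continuous section of the evaluation $e$. Define $H\colon A\times[0,1]\to X$ by $H\bigl((a,b),t\bigr)=\sigma(a,b)(t)$. Continuity of $H$ follows from continuity of $\sigma$ via the exponential law. Since $\sigma(a,b)$ is a path from $a$ to $b$ by the section property $e\circ\sigma=\mathrm{id}_A$, we get $H\bigl((a,b),0\bigr)=a=\pi_1(a,b)$ and $H\bigl((a,b),1\bigr)=b=\pi_2(a,b)$, so $H$ is a homotopy between $\pi_1|_A$ and $\pi_2|_A$.

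For the reverse direction, suppose $H\colon A\times[0,1]\to X$ is a homotopy with $H_0=\pi_1|_A$ and $H_1=\pi_2|_A$. Define $\sigma\colon A\to P(X)$ by $\sigma(a,b)(t)=H\bigl((a,b),t\bigr)$. Once more by the exponential law, $\sigma$ is continuous. The endpoint conditions on $H$ guarantee $\sigma(a,b)(0)=a$ and $\sigma(a,b)(1)=b$, so $e\circ\sigma=\mathrm{id}_A$, as required.

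The only real point to verify is continuity in each direction, which is the content of the exponential law, and this is the step I would be most careful about. Under our hypotheses $X$ is the realization of a finite simplicial complex, hence in particular locally compact Hausdorff, so the adjunction $\mathrm{Map}(A\times[0,1],X)\cong\mathrm{Map}(A,P(X))$ holds on the nose with the compact-open topology; in the general setting one passes to compactly generated spaces or appeals directly to \cite[Lemma 4.21]{F}. No deeper obstacle is expected, since the argument is essentially bookkeeping about the two boundary values of a homotopy.
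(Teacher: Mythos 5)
Your argument is correct and is essentially the standard proof that the paper delegates to the cited source (Farber's Lemma 4.21): sections of $e$ over $A$ correspond, via the exponential law, exactly to homotopies between $\pi_1|_A$ and $\pi_2|_A$. One tiny remark: the hypothesis needed for the adjunction $\mathrm{Map}(A\times[0,1],X)\cong\mathrm{Map}(A,P(X))$ is local compactness of the exponent $[0,1]$ (automatic), not of $X$, so no assumption on $X$ or $A$ is required and your caveat is unnecessary though harmless.
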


Consequently, $\TC(X)=\D(p_1,p_2)$, where: 

\begin{definition}[{\cite{HD}}]\label{homotopydistance}
The homotopic distance $\D(f,g)$ between two continuous maps $f,g\colon X\to Y$ is one less than the minimal cardinality of covers of $X$ by open sets $U$ on each of which the restricted maps $f_{|U}$ and $g_{|U}$ are homotopic. 
\end{definition}

\subsection{Contiguity vs.~homotopy}\label{shudyt}
For details on the facts reviewed in this subsection, the reader can consult standard references, such as~\cite[Chapter~3]{Spanier}.

\begin{definition}\label{denincontgd}
For a positive integer $c$, and a pair of simplicial maps $\varphi,\varphi': J\to K$,
\begin{enumerate}
\item\label{primeritita} $\varphi$ and $\varphi'$ are said to be 1-contiguous provided $\varphi(\sigma)\cup \varphi'(\sigma)$ is a simplex of $K$ for any facet (i.e., maximal simplex) $\sigma$ of $J$.
\item\label{segunditita} $\varphi$ and $\varphi'$ are said to be $c$-contiguous if there is a sequence of maps $\varphi_0, \varphi_1, \cdots, \varphi_c: J\to K$, with $\varphi_0=\varphi$ and $\varphi_c=\varphi'$, such that $\varphi_{i-1}$ and $\varphi_i$ are 1-contiguous for each $i\in\{1,2,\ldots,c\}$.
\end{enumerate}
\end{definition}

The sequence of maps $\varphi_j$ in Definition~\ref{denincontgd}.\ref{segunditita} is called a contiguity chain of length~$c$ between $\varphi$ and $\varphi'$. We write $\varphi\sim_c\varphi'$ to mean that there is such a sequence, and write $\varphi\sim \varphi'$ to mean $\varphi\sim_c \varphi'$ for some $c$. The latter is an equivalence relation in the set of simplicial maps $J\to K$. The corresponding equivalence classes are called contiguity classes. Note that contiguity classes are preserved under composition.

\begin{remark}\label{realizar}{\em
Contiguity classes refine homotopy classes in the sense that the topological realizations of a pair of contiguous simplicial maps are homotopic through a piecewise affine homotopy. More importantly, as soon as highly subdivided complexes are allowed, this leads to a one-to-one correspondence between contiguity and homotopy classes. To make the latter statement precise (in Theorem~\ref{bus} below), we need:
}\end{remark}

\begin{definition}
\emph{(a)} For a simplex $\sigma$ of $K$, the open realized simplex $\langle\sigma\rangle\subseteq\|K\|$ consists of the barycentric combinations $\sum_{v\in\sigma}t_vv$ with each $t_v$ positive. Note that the realization of $\sigma$, $\|\sigma\|\subseteq\|K\|$, is the closure of $\langle\sigma\rangle$. \emph{(b)} A simplicial approximation of a continuous map $f\colon \|J\|\to \|K\|$ is a simplicial map $\varphi\colon J\to K$ such that $\|\varphi\|(x)\in\|\sigma\|$ provided $f(x)\in\langle\sigma\rangle$.
\end{definition}

For instance, a simplicial map is the only approximation of its geometric realization. On the other hand, for a subdivision $K'$ of $K$, the standard homeomorphism $\|K'\|\stackrel{=}{\to}\|K\|$ is approximated by any simplicial map $\varphi\colon K'\to K$ with the property that, for every vertex $v'$ of $K'$, the barycentric coordinate of $v'\in\|K\|$ at $\varphi(v')$ is positive. Further, simplicial composition of approximations approximates the corresponding topological composition.

\begin{theo}\label{bus} Uniqueness of approximations: Any two approximations of a continuous map are $1$-contiguous. Further, for finite $J$:
\begin{itemize}
\item[(i)] Existence of approximations: Any continuous map $f\colon \| J \|\to \|K\|$ admits an approximation $\varphi_b\colon\bsd^b(J)\to K$ for any $b$ large enough. In particular, if $\iota\colon \bsd^{b+1}(J)\to\bsd^b(J)$ is any approximation of the identity on $\|J\|$, then $\varphi_b\hspace{.3mm} \iota$ and $\varphi_{b+1}$ are 1-contiguous.
\item[(ii)] Contiguity vs.~homotopy: Recall that simplicial maps in the same contiguity class have homotopic topological realizations (Remark~\ref{realizar}). Conversly, given homotopic maps $f_0,f_1\colon \|J\|\to\|K\|$, there is a non-negative integer $b_0$ such that, for each $b\geq b_0$, any pair of approximations $\varphi_0,\varphi_1\colon\bsd^b(J)\to K$ of $f_0,f_1$, respectively, satisfy $\varphi_0\sim\varphi_1$.
\end{itemize}
\end{theo}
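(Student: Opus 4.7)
My plan is to recast the condition ``$\varphi$ is an approximation of $f$'' in terms of open stars, namely that $f(\mathrm{st}(v))\subseteq \mathrm{st}(\varphi(v))$ for every vertex $v$ of $J$ (where $\mathrm{st}(v)=\bigcup_{v\in\sigma}\langle\sigma\rangle$), since this is equivalent to the stated condition on open simplices and interacts well with the contiguity criterion. For uniqueness, I would take two approximations $\varphi,\varphi'$ and a facet $\sigma=[v_0,\ldots,v_n]$ of $J$; choosing any $x\in\langle\sigma\rangle$, the point $f(x)$ lies in a unique open simplex $\langle\tau\rangle$, and since $x\in\mathrm{st}(v_i)$ for every $i$, both $\varphi(v_i)$ and $\varphi'(v_i)$ must be vertices of $\tau$. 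Hence $\varphi(\sigma)\cup\varphi'(\sigma)\subseteq\tau$ is a simplex of $K$, proving 1-contiguity.

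For (i), I would prove the classical simplicial approximation theorem. The collection $\{f^{-1}(\mathrm{st}(w))\}_{w\in K^{(0)}}$ is an open cover of the compact metric space $\|J\|$ (for a chosen metric on $\|J\|$); Lebesgue's lemma yields $\delta>0$ such that any subset of diameter less than $\delta$ lies in some $f^{-1}(\mathrm{st}(w))$. Since the mesh of $\bsd^b(J)$ tends to zero as $b\to\infty$, for $b$ large enough every open star $\mathrm{st}(v)$ in $\bsd^b(J)$ has diameter less than $\delta$, so one may define $\varphi_b(v)$ to be any vertex $w$ with $f(\mathrm{st}(v))\subseteq \mathrm{st}(w)$; the resulting vertex map is simplicial and an approximation of $f$. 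For the second clause, $\varphi_b\circ\iota$ is the composition of an approximation of $f$ with an approximation of the identity, which (as recalled before the statement) is again an approximation of $f$; since $\varphi_{b+1}$ is also an approximation of $f$, uniqueness gives $\varphi_b\iota\sim_1\varphi_{b+1}$.

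For (ii), I would work with a homotopy $H\colon\|J\|\times I\to\|K\|$ with $H_0=f_0$, $H_1=f_1$. The key observation is a \emph{closeness-implies-contiguity} lemma: if $g_0,g_1\colon\|J\|\to\|K\|$ are such that, for every $x\in\|J\|$, the points $g_0(x)$ and $g_1(x)$ lie in a common closed simplex of $K$, then any simplicial approximations $\psi_0,\psi_1\colon\bsd^b(J)\to K$ of $g_0,g_1$ (for the same $b$) are 1-contiguous; this follows by the same star-argument used for uniqueness. Applying uniform continuity of $H$ together with Lebesgue's lemma to the open cover $\{H^{-1}(\mathrm{st}(w))\}_{w\in K^{(0)}}$ of the compact $\|J\|\times I$, I would find $b_0$ and $0=t_0<t_1<\cdots<t_n=1$ such that, for $b\geq b_0$, each slice $H_{t_i}$ admits a simplicial approximation $\psi_i\colon\bsd^b(J)\to K$ and, moreover, consecutive slices $H_{t_i}$ and $H_{t_{i+1}}$ are close in the sense above, forcing $\psi_i\sim_1\psi_{i+1}$. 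Concatenating yields $\psi_0\sim\psi_n$, and uniqueness in the first part of the theorem gives $\varphi_0\sim_1\psi_0$ and $\varphi_1\sim_1\psi_n$, whence $\varphi_0\sim\varphi_1$.

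The main obstacle I anticipate is bookkeeping the two compactness/Lebesgue arguments in (ii) simultaneously: one needs a single $b_0$ and a single partition of $I$ that work both for the existence of the intermediate approximations $\psi_i$ and for the 1-contiguity of consecutive $\psi_i,\psi_{i+1}$. This is why it is convenient to isolate the closeness-implies-contiguity lemma first and then apply Lebesgue's lemma to the product $\|J\|\times I$ rather than to each slice separately, so that a uniform choice of mesh controls both phenomena at once.
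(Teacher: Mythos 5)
Your uniqueness argument and part (i) are correct and are exactly the standard route (the paper itself offers no proof of Theorem~\ref{bus}, deferring to Spanier, Chapter~3): the open-star reformulation of ``approximation'', the star argument on a facet for $1$-contiguity, the Lebesgue-number/vanishing-mesh argument for existence, and the observation that $\varphi_b\iota$ and $\varphi_{b+1}$ are two approximations of the same map $f$.

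The gap is in (ii), in the bridge between the Lebesgue argument and your closeness-implies-contiguity lemma. The lemma itself is fine, but its hypothesis --- that $H_{t_i}(x)$ and $H_{t_{i+1}}(x)$ lie in a common \emph{closed simplex} for every $x$ --- is not what uniform continuity delivers, and in general it cannot be arranged by refining the partition of $I$. Metric closeness in $\|K\|$ does not imply membership in a common closed simplex: if $K$ has edges $[a,b]$ and $[b,c]$ meeting only at $b$, points just on either side of $b$ are arbitrarily close yet lie in no common closed simplex, and a homotopy pushing different points of $\|J\|$ across $b$ at different times will, for \emph{every} finite partition, produce some $x$ and $i$ with $H_{t_i}(x)\in\langle [a,b]\rangle$ and $H_{t_{i+1}}(x)\in\langle [b,c]\rangle$. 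What the Lebesgue lemma applied to the cover $\{H^{-1}(\mathrm{st}(w))\}$ of $\|J\|\times I$ actually gives is the vertexwise star condition: for $b\geq b_0$ and $|t_{i+1}-t_i|$ small, each set $H(\mathrm{st}(v)\times[t_i,t_{i+1}])$, $v$ a vertex of $\bsd^b(J)$, lies in a single star $\mathrm{st}(w)$. This weaker, pointwise star-closeness does not suffice for your lemma (in the two-edge example, the constant maps to points on either side of $b$ are star-close, yet the approximations sending everything to $a$, respectively to $c$, are not $1$-contiguous). The standard repair --- essentially Spanier's proof --- is to use the vertexwise condition to define a single simplicial map $g_i(v):=w$, which is then a simplicial approximation of \emph{both} $H_{t_i}$ and $H_{t_{i+1}}$; your uniqueness statement then yields the chain $\varphi_0\sim_1 g_0\sim_1 g_1\sim_1\cdots\sim_1 g_{n-1}\sim_1\varphi_1$, hence $\varphi_0\sim\varphi_1$. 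So the overall architecture survives, but the consecutive-slice step must go through common approximations rather than through your closed-simplex closeness lemma.
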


All complexes we deal with will be assumed to be finite. Additionally whenever we need to talk about products, they will be taken in the category of ordered complexes (fixing some linear order on the corresponding sets of vertices). The key point is that the topological realization of an ordered product of complexes is homeomorphic to the product of the topological realizations of the factors (see~\cite{ES52}). We do not require, though, that maps of ordered complexes preserve the given orderings.

\section{Contiguity distance}\label{CD}
We now adapt the viewpoint in~\cite{MR3778506} in order to recast, in simplicial terms, the homotopic distance between continuous functions. Throughout this section $\varphi,\varphi'\colon L\to K$ denote a fixed pair of simplicial maps. A subcomplex $J$ of~$L$ is said to be a $c$-contiguity subcomplex for $\varphi$ and $\varphi'$ if the restrictions $\varphi_{|J}$ and $\varphi'_{|J}$ are $c$-contiguous. We say that $J$ is a contiguity subcomplex for $\varphi$ and $\varphi'$ if it is a $c$-contiguity subcomplex for some $c$. The strict $c$-contiguity distance of $\varphi$ and~$\varphi'$, denoted by $\C_c(\varphi,\varphi')$, is defined as one less than the smallest cardinality of finite covers of $L$ by $c$-contiguity subcomplexes for $\varphi$ and $\varphi'$. The \emph{strict contiguity distance} of $\varphi$ and~$\varphi'$ is $\C(\varphi,\varphi'):=\lim_c\C_c(\varphi,\varphi')$, the eventual constant value of the monotonic sequence
$$
\C_1(\varphi,\varphi')\geq \C_2(\varphi,\varphi') \geq \C_3(\varphi,\varphi') \geq \cdots. 
$$
\red{The concept of strict contiguity distance was introduced in~\cite[Definition 8.1]{HD} (see also~\cite{SDboratetall}) under the name of contiguity distance. We use the shorter name for the actual notion recovering homotopic distance (see Definition~\ref{condis} and Theorem~\ref{sc=tc} below).}

\begin{ejems}\label{exacon}{\em
Note that the subcomplex generated by any simplex of $L$ is a contiguity subcomplex for $\varphi$ and $\varphi'$ provided (the topological realization of) $K$ is connected. Connectedness of complexes is an assumption that will be in force from this point on. The relation $\varphi\sim_c\varphi'$ can be expressed through the equality $\C_c(\varphi,\varphi')=0$, while the equality
$\mbox{$\SC_{\text{strict}}(K)=\C(\pi_1,\pi_2)$}$ holds by definition, where $\pi_1,\pi_2\colon K\times K\to K$ are the two simplicial projections.
}\end{ejems} 

Remark~\ref{realizar} readily gives
\begin{equation}\label{readilygives}
\D(\|\varphi\|,\|\varphi'\|)\leq \C(\varphi,\varphi')
\end{equation}
We next use the full power of Theorem~\ref{bus} in order to adjust the definition of SD and replace the inequality in~(\ref{readilygives}) by an equality. For each positive integer $b$, fix a simplicial approximation $\iota=\iota_b\colon \bsd^b(L)\to \bsd^{b-1}(L)$ of the identity on $\|L\|$. We abuse notation and write $\iota\colon\bsd^b(L)\to \bsd^{b'}(L)$ for the obvious iterated composition of $\iota$ maps ($b\geq b'$). Set $\C_c^b(\varphi,\varphi'):=\C_c(\varphi_b,\varphi'_b)$ and $\C^b(\varphi,\varphi'):=\C(\varphi_b,\varphi'_b)$, the corresponding $c$-stabilized value, where the maps $\varphi_b, \varphi'_b\colon \bsd^b(L)\to K$ stand for $\varphi\circ\iota$ and $\varphi'\circ\iota$, respectively. Note that the contiguity class of $\varphi_b$ (respectively~$\varphi_b'$) depends only on that of $\varphi$ (respectively~$\varphi'$). Although the numbers $\C^b_c(\varphi,\varphi')$ depend in principle on the chosen set of approximations $\iota$, the proof of~\cite[Lemma~3.3]{MR3778506} applies word for word to show that the $c$-stabilized value $\C^b(\varphi,\varphi')$ is independent of the chosen approximations. Futher, if $J$ is a subcomplex of $\bsd^b(L)$ and $\lambda\colon\bsd(J)\to J$ is an approximation of the identity on $\|J\|$, then the two compositions in the diagram 
$$\xymatrix{
J\;\ar@{^{(}->}[r] & \bsd^b(L) \\
\bsd(J)\;\ar@{^{(}->}[r] \ar[u]^\lambda& \bsd^{b+1}(L)\ar[u]_\iota
}$$
are 1-contiguous, as they are approximations of the inclusion $\|J\|\subseteq\|L\|$. This shows $\C^b_c(\varphi,\varphi')\geq\C^{b+1}_{c+2}(\varphi,\varphi')$, which yields the monotonic sequence
\begin{equation}\label{siempresimono}
\C^0(\varphi,\varphi')\geq\C^1(\varphi,\varphi')\geq\C^2(\varphi,\varphi').\geq\cdots\geq0.
\end{equation}

\begin{definition}\label{condis} The contiguity distance between $\varphi$ and $\varphi'$ is $$\D(\varphi,\varphi'):=\lim_b\C^b(\varphi,\varphi'),$$ the $b$-stabilized value of the monotonic sequence~(\ref{siempresimono}).
\end{definition}

Since $\D(\varphi,\varphi')=\C^b(\varphi,\varphi')$ holds for any large enough index $b$, (\ref{readilygives}) becomes
\begin{equation}\label{bascom}
\D(\|\varphi\|,\|\varphi'\|)\leq \D(\varphi,\varphi').
\end{equation}
We now adapt the argument in the proof of~\cite[Theorem~3.5]{MR3778506} to prove:

\begin{theo}\label{sc=tc}
Equality holds in~(\ref{bascom}).
\end{theo}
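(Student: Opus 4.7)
The inequality $\D(\|\varphi\|,\|\varphi'\|)\leq \D(\varphi,\varphi')$ is (\ref{bascom}), so the work lies in establishing the reverse inequality. Set $n:=\D(\|\varphi\|,\|\varphi'\|)$ and choose an open cover $U_0,U_1,\ldots,U_n$ of $\|L\|$ together with homotopies $H_i\colon U_i\times[0,1]\to\|K\|$ from $\|\varphi\||_{U_i}$ to $\|\varphi'\||_{U_i}$. The strategy is to promote this topological cover to a simplicial cover of some high enough iterated barycentric subdivision of $L$, and then to invoke Theorem~\ref{bus}(ii) locally to turn each $H_i$ into a contiguity chain.

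For the first step, since $L$ is finite, the mesh of $\bsd^b(L)$ tends to zero as $b\to\infty$, so a Lebesgue-number argument produces an integer $b_1$ with the property that, for every $b\geq b_1$, each closed star in $\bsd^b(L)$ is contained in some $U_i$. Fixing such a $b$, define $J_i$ to be the subcomplex of $\bsd^b(L)$ generated by those simplices $\tau$ whose open star lies in $U_i$; the $J_i$ then form a finite cover of $\bsd^b(L)$ by subcomplexes, and by construction $\|J_i\|\subseteq U_i$, so each $H_i$ restricts to a homotopy between $\|\varphi\||_{\|J_i\|}$ and $\|\varphi'\||_{\|J_i\|}$.

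Now apply Theorem~\ref{bus}(ii) to each pair of restricted maps: there is a non-negative integer $b_2$ such that, after further subdividing $b_2$ extra times, any pair of simplicial approximations of $\|\varphi\||_{\|J_i\|}$ and $\|\varphi'\||_{\|J_i\|}$ become contiguous. Choosing $b\geq b_1+b_2$ uniformly across the finitely many indices $i$, and using the key remark (made in the paragraph preceding Definition~\ref{condis}) that the two natural routes from $\bsd(J_i)$ to $\bsd^b(L)$ produce $1$-contiguous maps, we see that the restrictions of $\varphi_b$ and $\varphi'_b$ to $J_i$ themselves serve as simplicial approximations of the corresponding continuous restrictions. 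Hence for $c$ sufficiently large each $J_i$ is a $c$-contiguity subcomplex for $\varphi_b$ and $\varphi'_b$, giving $\C^b_c(\varphi,\varphi')\leq n$, and thus $\D(\varphi,\varphi')\leq n$ after passing to the $c$- and $b$-stabilized limits.

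The main obstacle is bookkeeping rather than conceptual: one must track how the subdivision index $b$ needed for the Lebesgue argument interacts with the additional subdivisions demanded by Theorem~\ref{bus}(ii), and verify that the restrictions of the globally defined approximations $\varphi_b,\varphi'_b$ really qualify as approximations of the locally homotopic restrictions. Both points are resolved exactly as in~\cite[Theorem~3.5]{MR3778506}: finiteness of the cover makes a single $b$ suffice for all $i$, and the commutativity-up-to-$1$-contiguity of the subdivision square from the previous paragraph guarantees that restricting a global approximation yields a local approximation.
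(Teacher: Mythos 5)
Your proposal is correct and follows essentially the same route as the paper: subdivide $L$ until the open cover from Definition~\ref{homotopydistance} is realized by subcomplexes, then apply Theorem~\ref{bus} on each piece, using the $1$-contiguity of the subdivision square to see that the restrictions of $\varphi_b,\varphi'_b$ approximate the locally homotopic maps, whence $\C^b_c(\varphi,\varphi')\leq\D(\|\varphi\|,\|\varphi'\|)$. One small tidy-up: rather than re-choosing $b\geq b_1+b_2$ (which redefines the $J_i$ and makes $b_2$ depend circularly on $b$), fix $b=b_1$ first and then take $b'$ further subdivisions of the now-fixed $J_i$, noting $\bsd^{b'}(J_i)\subseteq\bsd^{b_1+b'}(L)$ --- exactly the order of quantifiers in the paper's proof.
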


\begin{proof}
Let $\D(\|\varphi\|,\|\varphi'\|)=k$ and choose an open covering $\{U_0,\ldots,U_k\}$ of $\|L\|$ as in Definition~\ref{homotopydistance}. Since $L$ is finite, there is an integer $b\geq0$ such that the realization of each simplex of $\bsd^b(L)$ is contained in some $U_j$ ($0\leq j\leq k$). For each $j\in\{0,1,\ldots,k\}$, let $L_j$ be the subcomplex of $\bsd^b(L)$ generated by those simplices whose realization is contained in $U_j$. Then $L_0,L_1,\ldots,L_k$ cover $L$, and $\|\varphi\|_{|L_j}$ and $\|\varphi'\|_{|L_j}$ are homotopic for $0\leq j\leq k$. By Theorem~\ref{bus}, we can pick large enough integers $b'$ and $c$ such that, for each $j\in\{0,1,\ldots,k\}$, the two composites 
$$\xymatrix{
\bsd^{b+b'}(L_i)\, \ar@{^{(}->}[r] & \bsd^{b+b'}(L) \ar@<.5ex>[r]^>>>>>{\varphi_{b+b'}}
\ar@<-.5ex>[r]_>>>>>{\varphi'_{b+b'}} & \,K
}$$
are $c$-contiguous. This yields $\D(\|\varphi\|,\|\varphi'\|)=k\geq\C^{b+b'}_c(\varphi,\varphi')\geq\D(\varphi,\varphi')$.
\end{proof}

\red{Since a continuous map $f\colon\|L\|\to\|K\|$ is homotopic to the geometric realization of any of its simplicial approximations, Theorem~\ref{sc=tc} and~\cite[Proposition~2.2]{HD} immediately yield:}

\begin{corollary}
\red{For a pair of contiguous maps $f,f'\colon\|L\|\to\|K\|$, we have $$\D(f,f')=\D(\varphi,\varphi'),$$ where $\varphi,\varphi'\colon\bsd^b(L)\to K$ are any corresponding simplicial approximations (for some large enough $b$).}
\end{corollary}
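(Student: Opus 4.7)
The plan is to derive the corollary by a two-step chain in which Theorem~\ref{sc=tc} does essentially all the heavy lifting. Reading the hypothesis as referring to continuous maps $f, f' \colon \|L\| \to \|K\|$, the opening observation is that, by Theorem~\ref{bus}(i), for all $b$ sufficiently large both $f$ and $f'$ admit simplicial approximations $\varphi, \varphi' \colon \bsd^b(L) \to K$, and the realization $\|\varphi\|$ of any simplicial approximation of $f$ is homotopic to $f$ (likewise $\|\varphi'\| \simeq f'$), as recorded in Remark~\ref{realizar} via the standard piecewise-affine straight-line homotopy inside the containing closed simplex of $\|K\|$.

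The two steps are then: \emph{(i)} invoke \cite[Proposition~2.2]{HD}, which asserts that $\D(\,\cdot\,,\,\cdot\,)$ is a homotopy invariant in each argument, to conclude
\[
\D(f,f') \;=\; \D(\|\varphi\|,\|\varphi'\|);
\]
and \emph{(ii)} apply Theorem~\ref{sc=tc} to the pair of simplicial maps $\varphi, \varphi'$ to obtain
\[
\D(\|\varphi\|,\|\varphi'\|) \;=\; \D(\varphi,\varphi').
\]
Concatenating the two equalities yields the claim.

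There is essentially no obstacle: both ingredients are already in hand, and the corollary is more a packaging of Theorem~\ref{sc=tc} than a new result. The only minor points worth checking are that one may choose a single $b$ that works simultaneously for $f$ and $f'$ (immediate from Theorem~\ref{bus}(i), since one may just take the maximum of the two thresholds), and that the right-hand side does not depend on the particular approximations $\varphi,\varphi'$ selected --- but this is guaranteed by the uniqueness-of-approximations clause of Theorem~\ref{bus} together with the fact, discussed just before Definition~\ref{condis}, that $\C^b(\varphi,\varphi')$ depends only on the contiguity classes of $\varphi$ and~$\varphi'$.
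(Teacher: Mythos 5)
Your proposal is correct and follows essentially the same route as the paper, which likewise deduces the corollary from the homotopy $f\simeq\|\varphi\|$ (and $f'\simeq\|\varphi'\|$), the homotopy invariance of $\D$ from \cite[Proposition~2.2]{HD}, and Theorem~\ref{sc=tc}. Your added remarks on choosing a common $b$ and on independence of the chosen approximations are harmless refinements of the same argument.
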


\red{Actually, Theorem~\ref{sc=tc} allows us to import to the simplicial realm properties of homotopic distance proved in~\cite{HD}. For instance:}

\begin{corollary}
$\D(\varphi,\varphi')$ is a contiguity invariant of the pair of maps $(\varphi,\varphi')$. \red{Explicitly, if the simplicial maps in the diagram
$$
\xymatrix{
L \ar@<.5ex>[r]^{\varphi} \ar@<-.5ex>[r]_{\varphi'} & K \ar[d]^\beta\\
L' \ar@<.5ex>[r]^{\psi} \ar@<-.5ex>[r]_{\psi'} \ar[u]^\alpha& K'
}
$$
satisfy $\beta\circ\varphi\circ\alpha\sim\psi$ and $\beta\circ\varphi'\circ\alpha\sim\psi'$, with $\alpha$ and $\beta$ contiguity equivalences (i.e., they have inverses up to contiguity), then $\D(\varphi,\varphi')=\D(\psi,\psi')$}
\end{corollary}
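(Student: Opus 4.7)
The strategy is to transfer the entire statement from the simplicial setting into the topological setting by means of Theorem~\ref{sc=tc}, and then invoke the corresponding homotopy-invariance of homotopic distance established in \cite{HD}. Concretely, the plan is to reduce the desired equality $\D(\varphi,\varphi')=\D(\psi,\psi')$ to the parallel equality $\D(\|\varphi\|,\|\varphi'\|)=\D(\|\psi\|,\|\psi'\|)$, which then follows from a known property of $\D$ on continuous maps.

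First I would apply Theorem~\ref{sc=tc} to both pairs $(\varphi,\varphi')$ and $(\psi,\psi')$, obtaining
\begin{equation*}
\D(\varphi,\varphi')=\D(\|\varphi\|,\|\varphi'\|) \qquad \text{and} \qquad \D(\psi,\psi')=\D(\|\psi\|,\|\psi'\|).
\end{equation*}
This bypasses any need to manipulate contiguity chains or subdivisions directly, and shifts the whole argument to the world of continuous maps and ordinary homotopy.

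Second I would translate the simplicial hypotheses. Since contiguous simplicial maps have homotopic realizations (Remark~\ref{realizar}), the given relations $\beta\circ\varphi\circ\alpha\sim\psi$ and $\beta\circ\varphi'\circ\alpha\sim\psi'$ realize to honest homotopies $\|\beta\|\circ\|\varphi\|\circ\|\alpha\|\simeq\|\psi\|$ and $\|\beta\|\circ\|\varphi'\|\circ\|\alpha\|\simeq\|\psi'\|$. Moreover, since $\alpha$ and $\beta$ admit contiguity inverses by hypothesis, their realizations $\|\alpha\|$ and $\|\beta\|$ admit homotopy inverses, and are therefore homotopy equivalences. At this point I would quote \cite[Proposition~2.2]{HD} (the same result used in the preceding corollary), which asserts that the homotopic distance of a pair of maps is preserved under pre- and post-composition with homotopy equivalences, up to homotopy. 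Applying this with $\|\alpha\|$ and $\|\beta\|$ yields $\D(\|\varphi\|,\|\varphi'\|)=\D(\|\psi\|,\|\psi'\|)$, and chaining with the first step gives the conclusion.

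The main obstacle is really just a bookkeeping check: one has to be sure that \cite[Proposition~2.2]{HD} is stated in a form general enough to cover pre-composition with a homotopy equivalence $\|\alpha\|$, post-composition with a homotopy equivalence $\|\beta\|$, and arbitrary homotopies in between. Once that is confirmed, the argument is purely a dictionary between contiguity and homotopy provided by Theorem~\ref{sc=tc} and Remark~\ref{realizar}; no new simplicial manoeuvres or subdivision estimates are needed.
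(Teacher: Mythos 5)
Your proposal is correct and is essentially the paper's own argument: the corollary is stated as an immediate consequence of Theorem~\ref{sc=tc}, which converts both $\D(\varphi,\varphi')$ and $\D(\psi,\psi')$ into homotopic distances of the realizations, after which the contiguity hypotheses realize (via Remark~\ref{realizar}) to homotopies and homotopy equivalences, and the known invariance of homotopic distance from~\cite{HD} finishes the job. The only minor point is bibliographic: the relevant facts in~\cite{HD} are the homotopy-invariance of $\D$ together with its behaviour under pre- and post-composition (not literally Proposition~2.2 alone), but you flagged exactly this check and it does not affect the argument.
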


\begin{corollary}
The inequality $\D(\varphi,\varphi')\leq\TC(\|K\|)$ holds, as well as the triangular inequality $\D(\varphi,\varphi'')\leq\D(\varphi,\varphi')+\D(\varphi',\varphi'')$ and the ``sublogarithmic'' inequalities
\begin{align*}
\D(\,\varphi\circ\psi,\varphi'\circ\psi'\,)&\leq\D(\varphi,\varphi')+\D(\psi,\psi')\\\D(\varphi\times\phi,\varphi'\times\phi')&\leq\D(\varphi,\varphi')+\D(\phi,\phi')
\end{align*}
for $\psi,\psi'\colon M\to L$, $\phi,\phi'\colon L'\to K'$ and $\varphi,\varphi',\varphi''\colon L\to K$.
\end{corollary}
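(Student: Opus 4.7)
The plan is to derive every inequality from the corresponding property of homotopic distance established in~\cite{HD}, using Theorem~\ref{sc=tc} as the bridge. Concretely, Theorem~\ref{sc=tc} gives $\D(\varphi,\varphi')=\D(\|\varphi\|,\|\varphi'\|)$, so each claim on the simplicial side translates verbatim into a claim about homotopic distance of the corresponding topological realizations, which is already available.

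First I would handle the bound $\D(\varphi,\varphi')\leq\TC(\|K\|)$ by observing that, via Theorem~\ref{sc=tc}, the left-hand side is $\D(\|\varphi\|,\|\varphi'\|)$, and it is a basic property of homotopic distance that any pair of continuous maps into a space $Y$ is bounded above by $\TC(Y)$ (indeed $\TC(Y)=\D(p_1,p_2)$ is the universal such distance via composition with $(\|\varphi\|,\|\varphi'\|)\colon\|L\|\to\|K\|\times\|K\|$, as noted after Lemma~\ref{motivation}). The triangle inequality is analogous: Theorem~\ref{sc=tc} reduces $\D(\varphi,\varphi'')\leq\D(\varphi,\varphi')+\D(\varphi',\varphi'')$ to the triangle inequality for $\D$ applied to the three realized maps, which is proved in~\cite{HD}.

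For the sublogarithmic inequality under composition, functoriality of geometric realization gives $\|\varphi\circ\psi\|=\|\varphi\|\circ\|\psi\|$, and likewise for the primed pair; Theorem~\ref{sc=tc} then turns the statement into the corresponding composition inequality for homotopic distance in~\cite{HD}, applied to $(\|\varphi\|,\|\varphi'\|)$ and $(\|\psi\|,\|\psi'\|)$. For the product inequality, one invokes the key fact (recalled at the end of Section~\ref{secback}) that for ordered simplicial complexes the canonical map $\|L\times L'\|\to\|L\|\times\|L'\|$ is a homeomorphism, which in turn identifies $\|\varphi\times\phi\|$ with $\|\varphi\|\times\|\phi\|$. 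With this identification in hand, Theorem~\ref{sc=tc} again reduces the claim to the product sublogarithmic inequality for homotopic distance established in~\cite{HD}.

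The main obstacle, such as it is, lies in the product case: one must verify that the chosen linear orderings on the vertex sets of $L\times L'$ and $K\times K'$ are compatible with the product structure and that the induced topological identification matches the hypothesis in~\cite{HD}. Beyond this bookkeeping, the entire corollary is a formal consequence of Theorem~\ref{sc=tc} together with the fact that $\D(-,-)$ on the topological side is already known to satisfy all four inequalities.
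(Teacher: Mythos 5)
Your proposal is correct and takes essentially the same route as the paper: the corollary is stated there exactly as an instance of importing, via Theorem~\ref{sc=tc} (i.e.\ $\D(\varphi,\varphi')=\D(\|\varphi\|,\|\varphi'\|)$), the corresponding properties of homotopic distance proved in~\cite{HD}, with no further argument given. Your additional remark on checking the identification $\|L\times L'\|\cong\|L\|\times\|L'\|$ and $\|\varphi\times\phi\|=\|\varphi\|\times\|\phi\|$ for ordered products is precisely the bookkeeping the paper leaves implicit (via the reference to~\cite{ES52}).
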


\begin{corollary}
$\cat(\|K\|)=\D(\ast,1_K)=\D(\iota_1,\iota_2)$ and $\TC(\|K\|)=\D(\pi_1,\pi_2)$. Here $1_K$ is the identity map on $K$, $\pi_i$ are the simplicial projections in Examples~\ref{exacon}, and $\iota_i\colon K\to K\times K$ are the axial inclusions $\iota_1(v)=(v,v_0)$ and $\iota_2(v)=(v_0,v)$.
\end{corollary}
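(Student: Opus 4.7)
The plan is to reduce each of the three claimed identities to its known continuous counterpart and then invoke Theorem~\ref{sc=tc}. The key observation is that, because ordered simplicial products realize to cartesian products of realizations (recalled at the end of Section~\ref{secback}), the geometric realizations $\|\pi_i\|$, $\|\iota_j\|$, $\|\ast\|$ and $\|1_K\|$ coincide respectively with the continuous projections $p_i\colon\|K\|\times\|K\|\to\|K\|$, the continuous axial inclusions $\|K\|\to\|K\|\times\|K\|$, a topological constant map, and $1_{\|K\|}$. Thus Theorem~\ref{sc=tc} immediately converts each simplicial contiguity distance appearing in the corollary into the corresponding continuous homotopic distance.

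What remains is the purely topological task of verifying the three identities $\TC(X)=\D(p_1,p_2)$, $\cat(X)=\D(c_{x_0},1_X)$ and $\cat(X)=\D(i_1,i_2)$, all of which are recorded in~\cite{HD}. The first was already noted in the paper just after Lemma~\ref{motivation}, as an immediate consequence of that lemma. The second is a direct reformulation of the definition of LS-category, since the open sets $U\subseteq X$ on which $c_{x_0}|_U\simeq 1_X|_U$ are exactly the categorical open sets.

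The only identity with nontrivial content is $\cat(X)=\D(i_1,i_2)$, and it is the only step where I would actually write something down. The plan is to verify that an open $U\subseteq X$ satisfies $i_1|_U\simeq i_2|_U$ if and only if the inclusion $U\hookrightarrow X$ is null-homotopic. One implication follows by composing a homotopy $i_1|_U\simeq i_2|_U$ with the projection $p_1$, which yields a contraction of $U$ inside $X$. Conversely, a null-homotopy $H\colon U\times I\to X$ of the inclusion (with $H_0=\mathrm{id}$ and $H_1=c_{x_0}$) produces a homotopy from $i_1|_U$ to $i_2|_U$ by concatenating $(H_{2t},c_{x_0})$ on $[0,1/2]$ with $(c_{x_0},H_{2-2t})$ on $[1/2,1]$. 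This matches homotopy-domain covers of the pair $(i_1,i_2)$ with categorical covers of $X$, yielding the equality. I do not foresee any real obstacle: the whole corollary is essentially a bookkeeping exercise once Theorem~\ref{sc=tc} and the ingredients from~\cite{HD} are in hand.
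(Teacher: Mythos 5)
Your proposal is correct and follows the paper's intended route: the corollary is meant to be an immediate import, via Theorem~\ref{sc=tc} and the homeomorphism $\|K\times K\|\cong\|K\|\times\|K\|$ for ordered products, of the continuous identities $\TC=\D(p_1,p_2)$, $\cat=\D(c_{x_0},1_X)=\D(i_1,i_2)$ established in~\cite{HD}. Your extra hands-on verification of $\cat(X)=\D(i_1,i_2)$ is sound but not needed beyond citing~\cite{HD}.
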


\section{Algorithms}\label{sda}
We now describe a randomized algorithm whose implementation yields reasonable estimations of the contiguity distance between simplicial maps. As described in the introductory section, in the case of simplicial complexity, our implementation constructs reasonably close-to-optimal systems of piecewise linear local motion planners.

We represent an (abstract  simplicial) complex $K$ by a pair $(V_K,F_K)$, where $V_K$ is the list of the vertices of $K$, and $F_K$ is the list of the facets (i.e.~maximal simplices) of~$K$. This encoding is convenient because of Definition~\ref{denincontgd}.\ref{primeritita} and, more importantly for our purposes, since a covering of a given complex is completely determined by a covering of its facets (see the discussion at the beginning of Subsection~\ref{seccontsbcxs}).

In what follows we assume given an algorithm, {\it Contiguous,} that checks whether two simplicial maps are contiguous\footnote{Note that if a simplicial map $\varphi$ is contiguous (in the sense of Definition~\ref{denincontgd}.\ref{primeritita}) to a vertex map $\varphi'$, then $\varphi'$ must be simplicial. In this paper we assume that {\it Contiguous} has been implemented solely on the basis of Definition~\ref{denincontgd}.\ref{primeritita}, so to admit any two vertex maps as input, necessarily reporting {\bf false} whenever one of the input parameters is not simplicial. In fact, under these assumption, {\it Contiguous\hspace{.3mm}}$(\varphi,\varphi)$ can be used to check if a vertex map $\varphi$ is simplicial.}. Such a function is available in standard mathematical software systems such as SageMath. In fact, our algorithms are easily implementable in SageMath, a system which has many other convenient features for our purposes ---such as a function for spelling out (as a list of facets) the ordered simplicial product of ordered complexes.

\subsection{Randomized local search of contiguity chains}
Recall that, for finite abstract simplicial complexes $J$ and $K$, Theorem~\ref{bus} characterizes homotopy classes of continuous maps $\|J\|\to\|K\|$ in terms of contiguity classes of maps $J\to K$ through a limiting process that takes finer and finer subdivisions of $J$. This classical fact is generalized in~\cite{MR3124944} by  introducing a certain ``contiguity complex'' $\text{Contig}(J,K)$ that approximates, as $J$ becomes sufficiently subdivided, the homotopy type of the function space of continuous maps $\|J\|\to\|K\|$. What is relevant for us is to remark that Blumberg and Mandell propose a certain randomized algorithm in order to study the rate of growth (under subdivision of the domain) of the components of $\text{Contig}(J,K)$. We adapt Blumberg-Mandell's algorithm (without the subdivision component) for our computer implemented search of piecewise linear local motion planners.

The randomized algorithm {\it LocalSearch} in this section inputs  a pair of simplicial maps $\varphi,\varphi'\colon J\to K$, a positive integer $M$, and a probability parameter $r$ ($r\in[0,1]$), and outputs a list $\Psi$ that either contains a chain of maps
\begin{equation}\label{cadena}
\varphi_0, \varphi_1, \cdots, \varphi_c: J\to K,\quad c\leq M,
\end{equation}
satisfying the conditions in Definition~\ref{denincontgd}.\ref{segunditita} or, else, is empty, in case the randomized search for~(\ref{cadena}) is not successful. Starting at $\varphi$, the search is done through a random walk with local steps in the space of simplicial maps $J\to K$. Actual steps in the walk are tried (and recorded in $\Psi$), at most $M$ times on a ``greedy'' basis, with the probability parameter~$r$ used to break a situation where no greedy step has been taken after a number of consecutive tries. A greedy step is actually taken whenever the distance from the goal map $\varphi'$ to the current position of the walk is larger than the distance from $\varphi'$ to the potentially new position of the walk. Here, the distance between simplicial maps $f,g\colon J\to K$ is defined as
$$
d(f,g)=\sum d_K(f(v),g(v)),
$$
where the summation runs over the vertices $v$ of $J$, and $d_K$ stands for the graph distance on the 1-skeleton of $K$.

\bigskip\begin{algorithm}[H]
\If{$\text{\it Contiguous\hspace{.5mm}}(\varphi,\varphi')$}{
\Return $\Psi\gets\{\varphi,\varphi'\}$}

$\Psi\gets\{\varphi\} \hspace{.2mm}$; \ $\phi\gets\varphi$ 

\For{\label{linea1}$i\gets1$ \KwTo $M$}{

$f\gets\phi\, ; \;\; v_j\gets \text{\it RandomVertex\hspace{.5mm}}(V_{J})$\label{line2}

$f(v_j)\gets \text{\it RandomVertex\hspace{0mm}}\left(V_{K}\setminus\{\phi(v_j)\}\rule{0mm}{4mm}\right)$\label{line3}

$p\gets \text{\it RandomNumber\hspace{.5mm}}(0,1)$

\If{\label{line5}\text{Contiguous\hspace{.5mm}}$(\phi,f)$ { \bf and } $\left(\rule{0mm}{4mm}\hspace{.4mm}p<r \text{\bf \hspace{.7mm} or\hspace{2mm}} d(f,\varphi')<d(\phi,\varphi')\right)$} {
$\Psi\gets\Psi\cup\{f\}$; \label{line6} $\hspace{.3mm}\phi\gets f$\label{line4}

\If{$\phi=\varphi'$} {\Return $\Psi$}
}}
\Return $\Psi\gets\varnothing$

\vspace{2mm}
\caption{{\it LocalSearch.} Current element in $\Psi$ is recorded by $\phi$.}
\end{algorithm}

\bigskip
We use the following variant of \emph{LocalSearch} in our actual implementation. Start by noticing that, at each step of the iterative process (line~\ref{linea1}), the potential new position~$f$ in the walk ---a random single-vertex variant of $\phi$--- may fail to be contiguous to the current position $\phi$, thus preventing us from taking the greedy step (line~\ref{line6}). The construction of $f$ (lines~\ref{line2} and~\ref{line3}) can then be replaced by a process that, first constructs a list $L_{\phi,w}$ of all the simplicial maps that differ from $\phi$ on a single randomly chosen vertex $w\in V_J$, and then chooses $f$ randomly from $L_{\phi,w}$ (provided $L_{\phi,w}\neq\varnothing$). Of course, with this modification, the first test condition in line~\ref{line5} can safely be removed. 

\subsection{Reduction of contiguity chains}
The random walk performed by {\it LocalSearch} has a local-step basis: any two consecutive maps $\varphi_{i}$ and $\varphi_{i+1}$ in a sequence~(\ref{cadena}) produced by {\it LocalSearch} differ only by their values at a single vertex of~$J$. In particular, it is usual that {\it LocalSearch} outputs a long sequence of (hundreds and even thousands of) consecutive contiguous maps in between $\varphi$ and $\varphi'$. This could be remedied by running a shortest path algorithm/heuristic on the contiguity graph generated by the maps $\varphi_i$ in the sequence. However, the construction of the graph is costly for large sequences. The simpler alternative described in this subsection works well given the random nature of \emph{LocalSearch}. 

\begin{algorithm}[ht!]
$\phi'\gets\{\varphi_0\}$; \ $j\gets 0$

\While{$j\neq c$}{

$i\gets c$

\While{not \text{Contiguous\hspace{.5mm}}$(\varphi_j,\varphi_i)$}{

$i\gets i-1$}

$\phi'\gets\phi'\cup\{\varphi_{i}\}$; \ $j\gets i$

}
\Return $\phi'$

\vspace{2mm}
\caption{{\it Reduce.}}
\end{algorithm}

The algorithm {\it Reduce} attempts to reduce the size of a contiguity sequence $\phi=(\varphi_0,\ldots,\varphi_c)$ by reporting the sequence $\phi'$ obtained from $\phi$ by discarding the terms $\varphi_{\ell+1}, \varphi_{\ell+2},\ldots,\varphi_{\ell+m-1}$ in chunks $\varphi_{\ell}, \varphi_{\ell+1},\ldots,\varphi_{\ell+m}$ whenever $m>1$ is maximal with $\varphi_{\ell}$ and $\varphi_{\ell+m}$ contiguous.

\subsection{Randomized contiguity subcomplexes}\label{seccontsbcxs}
For simplicial maps $\psi,\psi'\colon L\to K$, the task of finding coverings of $L$ by $c$-contiguity subcomplexes can be focused on facets of $L$. Indeed, by restricting to facets, a covering $\mathcal{C}$ of $L$ by $c$-contiguity subcomplexes yields (in a non-unique way) a partition $\mathcal{P}$ of the facets of $L$. In such a situation, if $J_P$ stands for the subcomplex of~$L$ generated by the facets in a given $P\in\mathcal{P}$, then $\mathcal{C}_\mathcal{P}:=\{J_P\colon P\in\mathcal{P}\}$ is a covering of $L$ by $c$-contiguity subcomplexes with $\card(\mathcal{C}_\mathcal{P})\leq\card(\mathcal{C})$. The definition of $\C_c(\psi,\psi')$ can therefore be reformulated by limiting attention to coverings $\mathcal{C}_\mathcal{P}$ coming from a partition~$\mathcal{P}$ of the facets of $L$ as above. This is the viewpoint in the algorithms described next.

A random subset of the facets of $L$ will most likely fail to generate a contiguity subcomplex. Likewise, a random partition $\mathcal{P}$ of the facets of $L$ will most likely fail to produce a covering $\mathcal{C}_\mathcal{P}$ by contiguity complexes. A more careful randomized search is needed. As a first step, the randomized algorithm \emph{RCC} (\emph{RandomContiguitysubComplex}) in this subsection aims at constructing a maximal contiguity subcomplex for a given pair of simplicial maps. Then, in Subsection~\ref{seccioncoverings} we describe a randomized algorithm that searches for partitions $\mathcal{P}$ of the set of facets of $L$ that yield a covering $\mathcal{C}_\mathcal{P}$ by contiguity subcomplexes. Lastly, the size of such a covering $\mathcal{C}_\mathcal{P}$ is optimized by the algorithm in Subsection~\ref{seccioncoveringsreloaded}. The latter algorithm is crucial for our purposes, as it gives us a real chance to get at systems of piecewise linear local motion planners that are (near-to) optimal in the sense of Section~\ref{shudyt}. 

\begin{algorithm}[ht!]
	$\mathcal{O} \gets \left\lbrace \sigma \in F_L \colon \sigma \notin F_J \right\rbrace $\\

	\While{$\mathcal{O} \neq \varnothing $ }{
		$\sigma\gets \text{{\it RandomFacet\hspace{0.5mm}}}(\mathcal{O}) $\\
		$J'\gets \text{{\it SimplicialComplex\hspace{0.5mm}}}(J,\sigma)$\\			       $\phi\gets \text{{\it LocalSearch\hspace{0.5mm}}}(\psi\hspace{.2mm}|_{J'},\psi'\hspace{.2mm}|_{J'})$ \\
		\If{$ \phi \neq \varnothing $}{\Return $J'$}
		$\mathcal{O} \gets \mathcal{O} \setminus \left\lbrace \sigma \right\rbrace $	
		
	}	
	
	\Return $J$
	\vspace{3mm}
	
	\caption{\textit{AddFacet.} A random $\sigma$ ensures variety of results from multiple runs.}
\end{algorithm}

We start with the algorithm {\it AddFacet}, whose input is a pair of simplicial maps $\psi,\psi'\colon L\to K$, and a contiguity subcomplex $J$ for $\psi$ and $\psi'$. It is implicitly assumed that $J$ is generated by a set of facets of $L$. With this information, the algorithm looks randomly for the first facet $\sigma$ of $L$ not in $J$ that, together with $J$, generates a contiguity subcomplex $J'$. If such a facet $\sigma$ is found, {\it AddFacet} outputs~$J'$, otherwise $J$ is reported. Each testing is done by the algorithm {\it LocalSearch} with the restricted simplicial maps $\psi|_{J'}$ and $\psi'|_{J'}$ as parameters. Note that {\it AddFacet} requires in addition the two parameters $M$ and $r$ needed by {\it LocalSearch}.

The main algorithm in this subsection, {\it RCC}, is an iteration of {\it AddFacet}. Starting with two simplicial maps $\psi,\psi'\colon L\to K$ as input, \emph{RCC} applies \emph{AddFacet} recursively, using the output of the previous application as (part of) the input for the next application ($\psi$ and $\psi'$ are kept as the rest of the input for all iterations of {\it AddFacet}). The iteration starts by using the contiguity subcomplex generated by a randomly chosen facet of~$L$, which is a contiguity subcomplex in view of Examples~\ref{exacon}. The iteration is applied at most $\card(F_L)$ times, and is set to stop whenever the current application of {\it AddFacet} is unable to add an additional facet (this will hold, for instance, if the total complex $L$ has been identified as a contiguity complex for $\psi$ and $\psi'$). 

A slight generalization of \emph{RCC} will also be needed in what follows. The algorithm \emph{AddFacets} is an iteration of \emph{AddFacet} on the same grounds as \emph{RCC}, except that the starting contiguity subcomplex is a prescribed parameter.

For latter use in the global algorithm, an actual implementation of both \emph{AddFacets} and \emph{RCC} should keep track of the (reduced version of the) last non-empty  contiguity sequence~$\phi$ constructed by \emph{AddFacet}, for this provides us with an explicit contiguity chain for the restrictions of $\psi$ and $\psi'$ to the output of \emph{AddFacets} or \emph{RCC}.

\subsection{Randomized coverings by contiguity subcomplexes}\label{seccioncoverings}
For simplicial maps $\psi,\psi'\colon L\to K$, the algorithm {\it Covering} in this subsection constructs a partition $\mathcal{P}$ of $F_L$ as the one described in the initial paragraph of Subsetion~\ref{seccontsbcxs}. The process is an iteration of {\it RCC}. Assume we have constructed a family~$\mathcal{P}$ of pairwise disjoint subsets of $F_L$ such that each $P\in\mathcal{P}$ generates a contiguity subcomplex $J_P$ for $\psi$ and $\psi'$ ($\mathcal{P}$ is empty at the start of the process). Then we execute {\it RCC} with the restricted maps $\psi_{|I},\psi'_{|I}\colon I\to K$ as parameters, where $I$ is the subcomplex of $L$ generated by the facets of $L$ that do not lie in any $P\in\mathcal{P}$. The set of relevant facets of the resulting contiguity complex is appended to $\mathcal{P}$. The process is iterated until the resulting $\mathcal{P}$ partitions $F_L$. 

\begin{algorithm}[ht!]
	$A \gets F_L\hspace{.2mm}$

	$\,\mathcal{P} \gets \varnothing$\\
	
	\While{$A \neq \varnothing$}{
	       $I\gets \text{{\it SimplicialComplex\hspace{0.5mm}}}(A)$ \\
		$J \gets \text{{\it RCC\hspace{0.5mm}}}(\psi|_I,\psi'|_I)$\\
		$P\gets \left(Facets(J)\cap F_L\right) \setminus \left(\cup_{Q\in\mathcal{P}} Q\right)$ \\
		$\mathcal{P} \gets \mathcal{P} \cup \left\lbrace P \right\rbrace \hspace{.2mm}$; \,$A \gets A \setminus P$	}
	
	\Return $\mathcal{P}$
	
	\vspace{3mm}
	
	\caption{\textit{Covering.} }
\end{algorithm}

\subsection{Optimization of coverings}\label{seccioncoveringsreloaded}
When applied to the two simplicial projections $\pi_1,\pi_2\colon K\times K\to K$, the algorithm {\it Covering} in the previous subsection constructs systems of piecewise linear local motion planners in rather short time. In addition, in cases where we know the topological optimal $\TC(\|K\|)+1$, some sporadic runs of \emph{Covering} report systems with cardinality reasonably close to optimal. In this subsection we describe the randomized algorithm \emph{OptimizedCovering} that addresses all other situations, i.e., those where the systems reported by \emph{Covering} appear to have too many domains. This is achieved by using a greedy strategy that attempts to reduce the number of piecewise linear local domains by increasing the size of large domains. \emph{OptimizedCovering} is the key process giving the convenient performance of our implementation without the need of expensive subdivisions, as advertised in the introductory section. 

\begin{algorithm}[ht!]
	
	$\mathcal{P} \gets \text{{\it Covering\hspace{0.5mm}}}(\psi,\psi');\;\;$
	$i \gets 0;\;\;$ $j\gets 0$ \\	
	\While{$i<N \text{ and \text{{\it Card\hspace{0.5mm}}}} (\mathcal{P})>t   $}{
	       $i\gets i+1;\;\;$ $\mathcal{P}' \gets \mathcal{P};\;\;$\\
		$\mathcal{P} \gets \text{{\it Order\hspace{0.5mm}}} (\mathcal{P}) $\\
		$D\gets \bigcup\limits_{k=j}^{\text{{\it Card\hspace{0.5mm}}}(\mathcal{P})-1} P_{k}$ \\
	
		$J_P \gets \text{{\it RCC\hspace{0.5mm}}}(\psi|_{J_D},\psi'|_{J_D})$\\
		
		$P\gets \text{{\it LargestSet\hspace{0.5mm}}}(P,P_j)$ \\
	
	       $D\gets \bigcup\limits_{k=\max(0,j-1)}^{\text{{\it Card\hspace{0.5mm}}}(\mathcal{P})-1} P_{k}$\\
	       
	       $J_Q \gets \text{{\it AddFacets\hspace{0.5mm}}}(J_P,\psi|_{J_D},\psi'|_{J_D})$\\
	       
		\If{$j>0$}{$Q\gets \text{{\it LargestSet\hspace{0.5mm}}}(Q,P_{j-1})$ \\
		}
	
		$\mathcal{P} \gets \text{{\it DeleteVoids\hspace{0.5mm}}}(\{P_0,P_1,\ldots,P_{j-2},Q,P_{j-1}-Q,P_j-Q,\ldots,P_p-Q\})$ 
		
		\If{$\text{{\it Card\hspace{0.5mm}}}(\mathcal{P}')<\text{{\it Card\hspace{0.5mm}}}(\mathcal{P})$}
		{$\mathcal{P}\gets \mathcal{P}' $}
		$j\gets j+1$\\
		\If{$j>\text{{\it Card\hspace{0.5mm}}}(\mathcal{P})-1$}{$j\gets0$}		
	}		
	\Return $\mathcal{P}$	
	\vspace{3mm}
	\caption{\textit{OptimizedCovering.}}
\end{algorithm}

{\it OptimizedCovering} starts with a partition $\mathcal{P}$ of $F_L$ produced by {\it Covering}, and goes into an iterative process that aims at shortening the length of $\mathcal{P}$. Explicitly, assume that, after the $i$-th stage of the iteration, the original partition has evolved to become the partition $\{P_0,P_1,\ldots,P_p\}$.
Then, using a control variable $j$ (inductively assumed to lie in between $0$ and $p$), the $(i+1)$-st recursive stage of {\it OptimizedCovering} performs the following actions:
\begin{enumerate}[(1)]
\item Order the partition so that $\card(P_0)\geq\card(P_1)\geq\cdots\geq\card(P_p)$.
\item Use the algorithm {\it RCC\hspace{.7mm}} to generate a random contiguity subcomplex~$J_P$ of $J_{P_j\cup P_{j+1}\cup\cdots \cup P_p}$. Keep in $P$ the largest set of $P$ and $P_j$.
\item Use the algorithm {\it AddFacets} to add as many facets of
\begin{equation}\label{tejiendo}
P_{j-1}\cup P_j\cup\cdots \cup P_p
\end{equation}
as possible to $P$, so to produce a contiguity subcomplex $J_{Q}$ containing $J_P$.  If $j=0$, part $P_{j-1}$ is inexistent in~(\ref{tejiendo}). If $j>0$, keep in $Q$ the largest set between $Q$ and $P_{j-1}$.
\end{enumerate}
The $(i+1)$-st iteration of the process then finishes by constructing the new (optimized) partition
\begin{equation}\label{removevoids}
\{P_0,P_1,\ldots,P_{j-2},Q,P_{j-1}-Q,P_j-Q,\ldots,P_p-Q\}\rule{0mm}{4mm},
\end{equation}
where empty parts are eliminated. If $j=0$, parts $P_0$, $P_1,\ldots,P_{j-2}$ and $P_{j-1}-Q$ are inexistent in~(\ref{removevoids}). Lastly, in preparation for the next iteration, the control variable~$j$ is incremented by one, unless its value has to be reset to zero so to meet the inductive hypothesis on $j$. The rationale behind this process is to use the best possible contiguity subcomplex that can be built from the facets in $\cup_{i\geq j}P_i$ (step (2)) to greedily improve on the cardinality of $P_{j-1}$ (step (3)). The net effect of such a recursive process is that new longer portions start ``bubbling up'' in the most recently produced partitions, while shorter portions tend to disappear from previously constructed partitions, as their elements get added to the longer emerging parts. As a consequence, the new partitions tend to have fewer domains than the old partitions.

For better results, the main recursive loop in the process above is meant to be repeated a large number of times (indicated by a parameter $N$ prescribed by the user). The pseudocode we describe uses in addition a parameter $t$ (also determined by the user) that breaks the recursion as soon as a partition with $t$ elements or less is achieved. The value of $t$ is to be provided on the basis of getting a ``short enough'' partition, either because the user would be happy with the prescribed bound, or simply because it does not make sense to insist on getting a partition of length smaller that the optimal $\TC(\|K\|)+1$ (if the latter number is known in advance, say by theoretical but non-constructive means).

\section{Computational results}\label{secndeexpmts}
\subsection{The circle}
Let $K:=\partial\Delta^2$, i.e., the most efficient triangulation of the circle, with vertices labelled $0,1,2$. The (realization of the ordered) product structure on $K\times K$ is depicted in Figure~\ref{s1xs1}, where opposite sides of the external square are identified as indicated. Using parameters $M=1000$ and $r=0.1$ for \emph{LocalSearch}, \emph{OptimizedCovering} yields, in about 57 seconds, the contiguity covering $\{J_0,J_1\}$ of $K\times K$ for the two projections, where $J_i$ is generated by the $i$-labelled triangles in Figure~\ref{s1xs1}. 

\begin{figure}[H]
$$\begin{tikzpicture}[x=.6cm,y=.6cm]
\draw(0,0)--(6,0); \draw(0,0)--(0,6);
\draw(0,6)--(6,6); \draw(6,0)--(6,6);
\draw(0,0)--(6,6); \draw(6,2)--(2,6); 
\draw(0,2)--(6,2); \draw(0,4)--(6,4); 
\draw(2,0)--(2,6); \draw(4,0)--(4,6);
\draw(2,0)--(4,2); \draw(4,2)--(6,0);
\draw(0,2)--(2,4); \draw(2,4)--(0,6);
\node [below] at (0,0) {\scriptsize$0'$};
\node [below] at (2,0) {\scriptsize$1'$};
\node [below] at (4,0) {\scriptsize$2'$};
\node [below] at (6,0) {\scriptsize$0'$};
\node [left] at (0,0) {\scriptsize$0''$};
\node [left] at (0,2) {\scriptsize$1''$};
\node [left] at (0,4) {\scriptsize$2''$};
\node [left] at (0,6) {\scriptsize$0''$};
\node [above] at (0,6) {\scriptsize$0'$};
\node [above] at (2,6) {\scriptsize$1'$};
\node [above] at (4,6) {\scriptsize$2'$};
\node [above] at (6,6) {\scriptsize$0'$};
\node [right] at (6,0) {\scriptsize$0''$};
\node [right] at (6,2) {\scriptsize$1''$};
\node [right] at (6,4) {\scriptsize$2''$};
\node [right] at (6,6) {\scriptsize$0''$};
\node [below] at (0.8,5) {$\bf 1$};
\node [above] at (1.4,4.8) {$\bf 1$};
\node [below] at (2.8,5) {$\bf 0$};
\node [above] at (3.4,4.8) {$\bf 0$};
\node [below] at (5.2,5) {$\bf 1$};
\node [above] at (4.65,4.8) {$\bf 0$};
\node [below] at (1.3,3) {$\bf 0$};
\node [above] at (.8,2.9) {$\bf 0$};
\node [below] at (3.3,3) {$\bf 1$};
\node [above] at (2.8,2.9) {$\bf 0$};
\node [below] at (4.7,3) {$\bf 1$};
\node [above] at (5.3,2.9) {$\bf 1$};
\node [below] at (1.35,1) {$\bf 1$};
\node [above] at (.6,0.9) {$\bf 0$};
\node [below] at (3.3,1) {$\bf 0$};
\node [above] at (2.7,0.9) {$\bf 1$};
\node [below] at (4.7,1) {$\bf 0$};
\node [above] at (5.3,0.9) {$\bf 0$};
\end{tikzpicture}$$
\caption{An optimized covering of the ordered product $\partial\Delta^2\times \partial\Delta^2$.}
\label{s1xs1}
\end{figure}
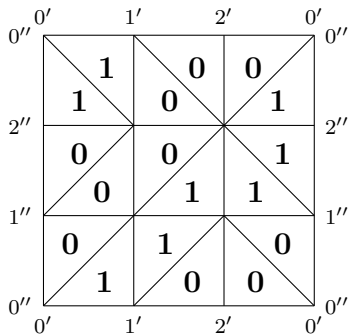

\begin{figure}[H]
\centering
	\subfloat[Output of \emph{Covering}.]{\begin{tikzpicture}
		\tikzset{dot/.style={circle,fill=#1,inner sep=0,minimum size=2.5pt}}
		\definecolor{userFillColour}{rgb}{0.8,0.8,0.8}
		\coordinate (v0) at (0,0);
		\coordinate (v1) at (1,0);
		\coordinate (v2) at (2,0);
		\coordinate (v3) at (3,0);
		\coordinate (v4) at (0,1);
		\coordinate (v5) at (1,1);
		\coordinate (v6) at (2,1);
		\coordinate (v7) at (3,1);
		\coordinate (v8) at (0,2);
		\coordinate (v9) at (1,2);
		\coordinate (v10) at (2,2);
		\coordinate (v11) at (3,2);
		\coordinate (v12) at (0,3);
		\coordinate (v13) at (1,3);
		\coordinate (v14) at (2,3);
		\coordinate (v15) at (3,3);
		
		\draw [black,fill=white] (v5) -- (v6)-- (v10)--cycle; 
		\draw [black,fill=white] (v15) -- (v14)-- (v10)--cycle; 
		\draw [black,fill=white] (v3) -- (v7)-- (v6)--cycle; 
		\draw [black,fill=white] (v4) -- (v8)-- (v9)--cycle; 
		\draw [black,fill=white] (v12) -- (v13)-- (v9)--cycle; 
		\draw [black,fill=white] (v7) -- (v6)-- (v10)--cycle; 
		\draw [black,fill=white] (v7) -- (v11)-- (v10)--cycle; 
		\draw [black,fill=white] (v15) -- (v11)-- (v10)--cycle; 
		\draw [black,fill=white] (v0) -- (v1)-- (v5)--cycle; 
		\draw [black,fill=white] (v1) -- (v2)-- (v6)--cycle; 
		\draw [black,fill=white] (v0) -- (v4)-- (v5)--cycle; 
		\draw [black,fill=white] (v12) -- (v8)-- (v9)--cycle; 
		\draw [black,fill=white] (v1) -- (v5)-- (v6)--cycle; 
		\draw [black,fill=white] (v3) -- (v2)-- (v6)--cycle; 
		\draw [black,fill=white] (v4) -- (v5)-- (v9)--cycle; 
		\draw [black,fill=white] (v13) -- (v9)-- (v10)--cycle; 
		\draw [black,fill=white] (v5) -- (v9)-- (v10)--cycle; 
		\draw [black,fill=white] (v13) -- (v14)-- (v10)--cycle; 
		
		\draw [black,fill=lightgray]  (v0) -- (v1)-- (v5) --cycle;
		\draw [black,fill=lightgray]  (v15) -- (v11)-- (v10) --cycle;
		\draw [black,fill=lightgray]  (v12) -- (v13)-- (v9) --cycle;
		\draw [black,fill=lightgray]  (v1) -- (v5)-- (v6) --cycle;
		\draw [black,fill=lightgray]  (v12) -- (v8)-- (v9) --cycle;
		\draw [black,fill=lightgray]  (v5) -- (v6)-- (v10) --cycle;
		\draw [black,fill=lightgray]  (v15) -- (v14)-- (v10) --cycle;
		\draw [black,fill=lightgray]  (v0) -- (v4)-- (v5) --cycle;
		\draw [black,fill=gray]  (v3) -- (v2)-- (v6) --cycle;
		\draw [black,fill=gray]  (v4) -- (v5)-- (v9) --cycle;
		\draw [black,fill=gray]  (v3) -- (v7)-- (v6) --cycle;
		\draw [black,fill=gray]  (v7) -- (v11)-- (v10) --cycle;
		\draw [black,fill=gray]  (v4) -- (v8)-- (v9) --cycle;
		\draw [black,fill=gray]  (v7) -- (v6)-- (v10) --cycle;
		\draw [black,fill=gray]  (v1) -- (v2)-- (v6) --cycle;
		\draw [black,fill=darkgray]  (v13) -- (v9)-- (v10) --cycle;
		\draw [black,fill=darkgray]  (v13) -- (v14)-- (v10) --cycle;
		\draw [black,fill=darkgray]  (v5) -- (v9)-- (v10) --cycle;
		
		\node [dot=white,draw=black] at (v0) {}; 
		\node [dot=white,draw=black] at (v1) {};
		\node [dot=white,draw=black] at (v2) {};
		\node [dot=white,draw=black] at (v3) {};
		\node [dot=white,draw=black] at (v4) {}; 
		\node [dot=white,draw=black] at (v5) {};
		\node [dot=white,draw=black] at (v6) {};
		\node [dot=white,draw=black] at (v7) {};
		\node [dot=white,draw=black] at (v8) {};
		\node [dot=white,draw=black] at (v9) {};
		\node [dot=white,draw=black] at (v10) {};
		\node [dot=white,draw=black] at (v11) {};
		\node [dot=white,draw=black] at (v12) {};
		\node [dot=white,draw=black] at (v13) {};
		\node [dot=white,draw=black] at (v14) {};
		\node [dot=white,draw=black] at (v15) {};
		
		\node [] at (v0) {\tiny $(0, 0)$}; 
		\node [] at (v1) {\tiny $(1,0)$};
		\node [] at (v2) {\tiny $(2,0)$};
		\node [] at (v3) {\tiny $(0,0)$};
		\node [] at (v4) {\tiny $(0,1)$}; 
		\node [] at (v5) {\tiny $(1,1)$};
		\node [] at (v6) {\tiny $(2,1)$};
		\node [] at (v7) {\tiny $(0,1)$};
		\node [] at (v8) {\tiny $(0,2)$};
		\node [] at (v9) {\tiny $(1,2)$};
		\node [] at (v10) {\tiny $(2,2)$};
		\node [] at (v11) {\tiny $(0,2)$};
		\node [] at (v12) {\tiny $(0,0)$};
		\node [] at (v13) {\tiny $(1,0)$};
		\node [] at (v14) {\tiny $(2,0)$};
		\node [] at (v15) {\tiny $(0,0)$};
		\end{tikzpicture}}~\subfloat[Third iteration.]{\begin{tikzpicture}
		\tikzset{dot/.style={circle,fill=#1,inner sep=0,minimum size=2.5pt}}
		\definecolor{userFillColour}{rgb}{0.8,0.8,0.8}
		\coordinate (v0) at (0,0);
		\coordinate (v1) at (1,0);
		\coordinate (v2) at (2,0);
		\coordinate (v3) at (3,0);
		\coordinate (v4) at (0,1);
		\coordinate (v5) at (1,1);
		\coordinate (v6) at (2,1);
		\coordinate (v7) at (3,1);
		\coordinate (v8) at (0,2);
		\coordinate (v9) at (1,2);
		\coordinate (v10) at (2,2);
		\coordinate (v11) at (3,2);
		\coordinate (v12) at (0,3);
		\coordinate (v13) at (1,3);
		\coordinate (v14) at (2,3);
		\coordinate (v15) at (3,3);
		
		\draw [black,fill=white] (v5) -- (v6)-- (v10)--cycle; 
		\draw [black,fill=white] (v15) -- (v14)-- (v10)--cycle; 
		\draw [black,fill=white] (v3) -- (v7)-- (v6)--cycle; 
		\draw [black,fill=white] (v4) -- (v8)-- (v9)--cycle; 
		\draw [black,fill=white] (v12) -- (v13)-- (v9)--cycle; 
		\draw [black,fill=white] (v7) -- (v6)-- (v10)--cycle; 
		\draw [black,fill=white] (v7) -- (v11)-- (v10)--cycle; 
		\draw [black,fill=white] (v15) -- (v11)-- (v10)--cycle; 
		\draw [black,fill=white] (v0) -- (v1)-- (v5)--cycle; 
		\draw [black,fill=white] (v1) -- (v2)-- (v6)--cycle; 
		\draw [black,fill=white] (v0) -- (v4)-- (v5)--cycle; 
		\draw [black,fill=white] (v12) -- (v8)-- (v9)--cycle; 
		\draw [black,fill=white] (v1) -- (v5)-- (v6)--cycle; 
		\draw [black,fill=white] (v3) -- (v2)-- (v6)--cycle; 
		\draw [black,fill=white] (v4) -- (v5)-- (v9)--cycle; 
		\draw [black,fill=white] (v13) -- (v9)-- (v10)--cycle; 
		\draw [black,fill=white] (v5) -- (v9)-- (v10)--cycle; 
		\draw [black,fill=white] (v13) -- (v14)-- (v10)--cycle; 
		
		\draw [black,fill=lightgray]  (v0) -- (v1)-- (v5) --cycle;
		\draw [black,fill=lightgray]  (v15) -- (v11)-- (v10) --cycle;
		\draw [black,fill=lightgray]  (v12) -- (v13)-- (v9) --cycle;
		\draw [black,fill=lightgray]  (v1) -- (v5)-- (v6) --cycle;
		\draw [black,fill=lightgray]  (v12) -- (v8)-- (v9) --cycle;
		\draw [black,fill=lightgray]  (v5) -- (v6)-- (v10) --cycle;
		\draw [black,fill=lightgray]  (v15) -- (v14)-- (v10) --cycle;
		\draw [black,fill=lightgray]  (v0) -- (v4)-- (v5) --cycle;
		\draw [black,fill=gray]  (v13) -- (v9)-- (v10) --cycle;
		\draw [black,fill=gray]  (v13) -- (v14)-- (v10) --cycle;
		\draw [black,fill=gray]  (v1) -- (v2)-- (v6) --cycle;
		\draw [black,fill=gray]  (v4) -- (v5)-- (v9) --cycle;
		\draw [black,fill=gray]  (v5) -- (v9)-- (v10) --cycle;
		\draw [black,fill=gray]  (v3) -- (v7)-- (v6) --cycle;
		\draw [black,fill=gray]  (v4) -- (v8)-- (v9) --cycle;
		\draw [black,fill=gray]  (v3) -- (v2)-- (v6) --cycle;
		\draw [black,fill=darkgray]  (v7) -- (v6)-- (v10) --cycle;
		\draw [black,fill=darkgray]  (v7) -- (v11)-- (v10) --cycle;
		
		\node [dot=white,draw=black] at (v0) {}; 
		\node [dot=white,draw=black] at (v1) {};
		\node [dot=white,draw=black] at (v2) {};
		\node [dot=white,draw=black] at (v3) {};
		\node [dot=white,draw=black] at (v4) {}; 
		\node [dot=white,draw=black] at (v5) {};
		\node [dot=white,draw=black] at (v6) {};
		\node [dot=white,draw=black] at (v7) {};
		\node [dot=white,draw=black] at (v8) {};
		\node [dot=white,draw=black] at (v9) {};
		\node [dot=white,draw=black] at (v10) {};
		\node [dot=white,draw=black] at (v11) {};
		\node [dot=white,draw=black] at (v12) {};
		\node [dot=white,draw=black] at (v13) {};
		\node [dot=white,draw=black] at (v14) {};
		\node [dot=white,draw=black] at (v15) {};
		
		\node [] at (v0) {\tiny $(0, 0)$}; 
		\node [] at (v1) {\tiny $(1,0)$};
		\node [] at (v2) {\tiny $(2,0)$};
		\node [] at (v3) {\tiny $(0,0)$};
		\node [] at (v4) {\tiny $(0,1)$}; 
		\node [] at (v5) {\tiny $(1,1)$};
		\node [] at (v6) {\tiny $(2,1)$};
		\node [] at (v7) {\tiny $(0,1)$};
		\node [] at (v8) {\tiny $(0,2)$};
		\node [] at (v9) {\tiny $(1,2)$};
		\node [] at (v10) {\tiny $(2,2)$};
		\node [] at (v11) {\tiny $(0,2)$};
		\node [] at (v12) {\tiny $(0,0)$};
		\node [] at (v13) {\tiny $(1,0)$};
		\node [] at (v14) {\tiny $(2,0)$};
		\node [] at (v15) {\tiny $(0,0)$};
		\end{tikzpicture}
		
	}\\
	\subfloat[Sixth iteration.]{		
		\begin{tikzpicture}
		\tikzset{dot/.style={circle,fill=#1,inner sep=0,minimum size=2.5pt}}
		\definecolor{userFillColour}{rgb}{0.8,0.8,0.8}
		\coordinate (v0) at (0,0);
		\coordinate (v1) at (1,0);
		\coordinate (v2) at (2,0);
		\coordinate (v3) at (3,0);
		\coordinate (v4) at (0,1);
		\coordinate (v5) at (1,1);
		\coordinate (v6) at (2,1);
		\coordinate (v7) at (3,1);
		\coordinate (v8) at (0,2);
		\coordinate (v9) at (1,2);
		\coordinate (v10) at (2,2);
		\coordinate (v11) at (3,2);
		\coordinate (v12) at (0,3);
		\coordinate (v13) at (1,3);
		\coordinate (v14) at (2,3);
		\coordinate (v15) at (3,3);
		
		\draw [black,fill=white] (v5) -- (v6)-- (v10)--cycle; 
		\draw [black,fill=white] (v15) -- (v14)-- (v10)--cycle; 
		\draw [black,fill=white] (v3) -- (v7)-- (v6)--cycle; 
		\draw [black,fill=white] (v4) -- (v8)-- (v9)--cycle; 
		\draw [black,fill=white] (v12) -- (v13)-- (v9)--cycle; 
		\draw [black,fill=white] (v7) -- (v6)-- (v10)--cycle; 
		\draw [black,fill=white] (v7) -- (v11)-- (v10)--cycle; 
		\draw [black,fill=white] (v15) -- (v11)-- (v10)--cycle; 
		\draw [black,fill=white] (v0) -- (v1)-- (v5)--cycle; 
		\draw [black,fill=white] (v1) -- (v2)-- (v6)--cycle; 
		\draw [black,fill=white] (v0) -- (v4)-- (v5)--cycle; 
		\draw [black,fill=white] (v12) -- (v8)-- (v9)--cycle; 
		\draw [black,fill=white] (v1) -- (v5)-- (v6)--cycle; 
		\draw [black,fill=white] (v3) -- (v2)-- (v6)--cycle; 
		\draw [black,fill=white] (v4) -- (v5)-- (v9)--cycle; 
		\draw [black,fill=white] (v13) -- (v9)-- (v10)--cycle; 
		\draw [black,fill=white] (v5) -- (v9)-- (v10)--cycle; 
		\draw [black,fill=white] (v13) -- (v14)-- (v10)--cycle; 
		
		\draw [black,fill=lightgray]  (v7) -- (v6)-- (v10) --cycle;
		\draw [black,fill=lightgray]  (v5) -- (v6)-- (v10) --cycle;
		\draw [black,fill=lightgray]  (v15) -- (v11)-- (v10) --cycle;
		\draw [black,fill=lightgray]  (v12) -- (v13)-- (v9) --cycle;
		\draw [black,fill=lightgray]  (v1) -- (v5)-- (v6) --cycle;
		\draw [black,fill=lightgray]  (v12) -- (v8)-- (v9) --cycle;
		\draw [black,fill=lightgray]  (v0) -- (v1)-- (v5) --cycle;
		\draw [black,fill=lightgray]  (v7) -- (v11)-- (v10) --cycle;
		\draw [black,fill=lightgray]  (v15) -- (v14)-- (v10) --cycle;
		\draw [black,fill=gray]  (v3) -- (v2)-- (v6) --cycle;
		\draw [black,fill=gray]  (v4) -- (v8)-- (v9) --cycle;
		\draw [black,fill=gray]  (v3) -- (v7)-- (v6) --cycle;
		\draw [black,fill=gray]  (v4) -- (v5)-- (v9) --cycle;
		\draw [black,fill=gray]  (v13) -- (v9)-- (v10) --cycle;
		\draw [black,fill=gray]  (v5) -- (v9)-- (v10) --cycle;
		\draw [black,fill=gray]  (v13) -- (v14)-- (v10) --cycle;
		\draw [black,fill=gray]  (v1) -- (v2)-- (v6) --cycle;
		\draw [black,fill=darkgray]  (v0) -- (v4)-- (v5) --cycle;
		
		\node [dot=white,draw=black] at (v0) {}; 
		\node [dot=white,draw=black] at (v1) {};
		\node [dot=white,draw=black] at (v2) {};
		\node [dot=white,draw=black] at (v3) {};
		\node [dot=white,draw=black] at (v4) {}; 
		\node [dot=white,draw=black] at (v5) {};
		\node [dot=white,draw=black] at (v6) {};
		\node [dot=white,draw=black] at (v7) {};
		\node [dot=white,draw=black] at (v8) {};
		\node [dot=white,draw=black] at (v9) {};
		\node [dot=white,draw=black] at (v10) {};
		\node [dot=white,draw=black] at (v11) {};
		\node [dot=white,draw=black] at (v12) {};
		\node [dot=white,draw=black] at (v13) {};
		\node [dot=white,draw=black] at (v14) {};
		\node [dot=white,draw=black] at (v15) {};
		
		\node [] at (v0) {\tiny $(0, 0)$}; 
		\node [] at (v1) {\tiny $(1,0)$};
		\node [] at (v2) {\tiny $(2,0)$};
		\node [] at (v3) {\tiny $(0,0)$};
		\node [] at (v4) {\tiny $(0,1)$}; 
		\node [] at (v5) {\tiny $(1,1)$};
		\node [] at (v6) {\tiny $(2,1)$};
		\node [] at (v7) {\tiny $(0,1)$};
		\node [] at (v8) {\tiny $(0,2)$};
		\node [] at (v9) {\tiny $(1,2)$};
		\node [] at (v10) {\tiny $(2,2)$};
		\node [] at (v11) {\tiny $(0,2)$};
		\node [] at (v12) {\tiny $(0,0)$};
		\node [] at (v13) {\tiny $(1,0)$};
		\node [] at (v14) {\tiny $(2,0)$};
		\node [] at (v15) {\tiny $(0,0)$};
		
		\end{tikzpicture}}~\subfloat[Seventh iteration.]{\begin{tikzpicture}
		\tikzset{dot/.style={circle,fill=#1,inner sep=0,minimum size=2.5pt}}
		\definecolor{userFillColour}{rgb}{0.8,0.8,0.8}
		\coordinate (v0) at (0,0);
		\coordinate (v1) at (1,0);
		\coordinate (v2) at (2,0);
		\coordinate (v3) at (3,0);
		\coordinate (v4) at (0,1);
		\coordinate (v5) at (1,1);
		\coordinate (v6) at (2,1);
		\coordinate (v7) at (3,1);
		\coordinate (v8) at (0,2);
		\coordinate (v9) at (1,2);
		\coordinate (v10) at (2,2);
		\coordinate (v11) at (3,2);
		\coordinate (v12) at (0,3);
		\coordinate (v13) at (1,3);
		\coordinate (v14) at (2,3);
		\coordinate (v15) at (3,3);
		
		\draw [black,fill=white] (v5) -- (v6)-- (v10)--cycle; 
		\draw [black,fill=white] (v15) -- (v14)-- (v10)--cycle; 
		\draw [black,fill=white] (v3) -- (v7)-- (v6)--cycle; 
		\draw [black,fill=white] (v4) -- (v8)-- (v9)--cycle; 
		\draw [black,fill=white] (v12) -- (v13)-- (v9)--cycle; 
		\draw [black,fill=white] (v7) -- (v6)-- (v10)--cycle; 
		\draw [black,fill=white] (v7) -- (v11)-- (v10)--cycle; 
		\draw [black,fill=white] (v15) -- (v11)-- (v10)--cycle; 
		\draw [black,fill=white] (v0) -- (v1)-- (v5)--cycle; 
		\draw [black,fill=white] (v1) -- (v2)-- (v6)--cycle; 
		\draw [black,fill=white] (v0) -- (v4)-- (v5)--cycle; 
		\draw [black,fill=white] (v12) -- (v8)-- (v9)--cycle; 
		\draw [black,fill=white] (v1) -- (v5)-- (v6)--cycle; 
		\draw [black,fill=white] (v3) -- (v2)-- (v6)--cycle; 
		\draw [black,fill=white] (v4) -- (v5)-- (v9)--cycle; 
		\draw [black,fill=white] (v13) -- (v9)-- (v10)--cycle; 
		\draw [black,fill=white] (v5) -- (v9)-- (v10)--cycle; 
		\draw [black,fill=white] (v13) -- (v14)-- (v10)--cycle; 
		
		\draw [black,fill=lightgray]  (v3) -- (v2)-- (v6) --cycle;
		\draw [black,fill=lightgray]  (v4) -- (v5)-- (v9) --cycle;
		\draw [black,fill=lightgray]  (v3) -- (v7)-- (v6) --cycle;
		\draw [black,fill=lightgray]  (v4) -- (v8)-- (v9) --cycle;
		\draw [black,fill=lightgray]  (v13) -- (v9)-- (v10) --cycle;
		\draw [black,fill=lightgray]  (v15) -- (v14)-- (v10) --cycle;
		\draw [black,fill=lightgray]  (v5) -- (v9)-- (v10) --cycle;
		\draw [black,fill=lightgray]  (v13) -- (v14)-- (v10) --cycle;
		\draw [black,fill=lightgray]  (v1) -- (v2)-- (v6) --cycle;
		\draw [black,fill=lightgray]  (v0) -- (v4)-- (v5) --cycle;
		\draw [black,fill=gray]  (v5) -- (v6)-- (v10) --cycle;
		\draw [black,fill=gray]  (v12) -- (v13)-- (v9) --cycle;
		\draw [black,fill=gray]  (v7) -- (v6)-- (v10) --cycle;
		\draw [black,fill=gray]  (v15) -- (v11)-- (v10) --cycle;
		\draw [black,fill=gray]  (v7) -- (v11)-- (v10) --cycle;
		\draw [black,fill=gray]  (v0) -- (v1)-- (v5) --cycle;
		\draw [black,fill=gray]  (v12) -- (v8)-- (v9) --cycle;
		\draw [black,fill=gray]  (v1) -- (v5)-- (v6) --cycle;
		
		\node [dot=white,draw=black] at (v0) {}; 
		\node [dot=white,draw=black] at (v1) {};
		\node [dot=white,draw=black] at (v2) {};
		\node [dot=white,draw=black] at (v3) {};
		\node [dot=white,draw=black] at (v4) {}; 
		\node [dot=white,draw=black] at (v5) {};
		\node [dot=white,draw=black] at (v6) {};
		\node [dot=white,draw=black] at (v7) {};
		\node [dot=white,draw=black] at (v8) {};
		\node [dot=white,draw=black] at (v9) {};
		\node [dot=white,draw=black] at (v10) {};
		\node [dot=white,draw=black] at (v11) {};
		\node [dot=white,draw=black] at (v12) {};
		\node [dot=white,draw=black] at (v13) {};
		\node [dot=white,draw=black] at (v14) {};
		\node [dot=white,draw=black] at (v15) {};
		
		\node [] at (v0) {\tiny $(0, 0)$}; 
		\node [] at (v1) {\tiny $(1,0)$};
		\node [] at (v2) {\tiny $(2,0)$};
		\node [] at (v3) {\tiny $(0,0)$};
		\node [] at (v4) {\tiny $(0,1)$}; 
		\node [] at (v5) {\tiny $(1,1)$};
		\node [] at (v6) {\tiny $(2,1)$};
		\node [] at (v7) {\tiny $(0,1)$};
		\node [] at (v8) {\tiny $(0,2)$};
		\node [] at (v9) {\tiny $(1,2)$};
		\node [] at (v10) {\tiny $(2,2)$};
		\node [] at (v11) {\tiny $(0,2)$};
		\node [] at (v12) {\tiny $(0,0)$};
		\node [] at (v13) {\tiny $(1,0)$};
		\node [] at (v14) {\tiny $(2,0)$};
		\node [] at (v15) {\tiny $(0,0)$};
		\end{tikzpicture}}
	\caption{Main steps in \emph{OptimizedCovering} in the case of $\partial\Delta^2$.\label{procesodecovering}}
\end{figure}

Relevant stages in the actual process performed by \emph{OptimizedCovering} that lead to the cover $\{J_0,J_1\}$ are illustrated in Figure~\ref{procesodecovering}. At each of the stages shown, the largest (smallest) subcomplex is highlighted in light (dark) gray. \emph{OptimizedCover} finishes when the dark gray subcomplex has been incorporated into two large subcomplexes. The corresponding contiguity chains (simplified by \emph{Reduce}) for the restricted projections $\pi_1|_{J_i},\pi_2|_{J_i}\colon J_i\to K$ are described in Table~\ref{tab:cuadro4} (for $i=0$) and Table~\ref{tab:cuadro3} (for $i=1$). In particular, the optimal $\SC_{\text{strict}}(\partial\Delta^2)=1$ is attained though contiguity chains of length at most $9$.

\begin{table}[H]
\centering
\begin{tabular}{|c|c|c|c|c|c|c|c|c|c|} \hline
	& $\left(0, 0\right)$ & $\left(0, 1\right)$ & $\left(0,
	2\right)$ & $\left(1, 0\right)$ & $\left(1, 1\right)$ &
	$\left(1, 2\right)$ & $\left(2, 0\right)$ & $\left(2, 1\right)$
	& $\left(2, 2\right)$ \\ \hline
	$\varphi_0=\pi_1|_{J_0}$	& $0$ & $0$ & $0$ & $1$ & $1$ & $1$ & $2$
	& $2$ & $2$ \\ \hline
	$\varphi_1$	& $0$ & $0$ & $0$ & $2$ & $1$ & $1$ & $2$
	& $2$ & $2$ \\ \hline
	$\varphi_2$	& $0$ & $0$ & $0$ & $2$ & $1$ & $1$ & $0$
	& $0$ & $2$ \\ \hline
	$\varphi_3$	& $0$ & $1$ & $1$ & $2$ & $1$ & $1$ & $0$
	& $0$ & $2$ \\ \hline
	$\varphi_4$	& $0$ & $1$ & $2$ & $2$ & $1$ & $2$ & $0$
	& $0$ & $2$ \\ \hline
	$\varphi_5$	& $0$ & $1$ & $2$ & $0$ & $1$ & $2$ & $0$
	& $0$ & $2$ \\ \hline
	$\varphi_6=\pi_2|_{J_0}$	& $0$ & $1$ & $2$ & $0$ & $1$ & $2$ & $0$
	& $1$ & $2$ \\ \hline
\end{tabular}
\caption{Contiguity chain between $\pi_1|_{J_0}$ and $\pi_2|_{J_0}$ for $J_0$ in Figure~\ref{s1xs1},
	\label{tab:cuadro4}}
\end{table}

\begin{table}[H]
\centering
\begin{tabular}{|c|c|c|c|c|c|c|c|c|} \hline
  & $\left(0, 0\right)$ & $\left(0, 1\right)$ & $\left(0, 2\right)$ & $\left(1, 0\right)$ & $\left(1, 1\right)$ & $\left(1, 2\right)$ & $\left(2, 1\right)$ & $\left(2, 2\right)$ \\ \hline
	$\varphi_0=\pi_1|_{J_1}$ & $0$ & $0$ & $0$ & $1$ & $1$ & $1$ & $2$ & $2$ \\ \hline
	$\varphi_1$ & $0$ & {$2$} & $0$ & $1$ & $1$ & $1$ & $2$ & $2$ \\ \hline
	$\varphi_2$ & $0$ & $2$ & $0$ & $1$ & $1$ & $1$ & {$1$} & $2$ \\ \hline
	$\varphi_3$ & $0$ & $2$ & $0$ & {$0$} & $1$ & $1$ & $1$ & $2$ \\ \hline
	$\varphi_4$ & $0$ & $2$ & $0$ & $0$ & $1$ & {$0$} & $1$ & $2$ \\ \hline
	$\varphi_5$ & $0$ & $2$ & {$2$} & $0$ & $1$ & $0$ & $1$ & $2$ \\ \hline
	$\varphi_6$ & $0$ & {$1$} & $2$ & $0$ & $1$ & $0$ & $1$ & $2$ \\ \hline
	$\varphi_7$ & $0$ & {$2$} & $2$ & $0$ & $1$ & $0$ & $1$ & $2$ \\ \hline
	$\varphi_8$ & $0$ & {$1$} & $2$ & $0$ & $1$ & $0$ & $1$ & $2$ \\ \hline
	$\varphi_9=\pi_2|_{J_1}$ & $0$ & $1$           & $2$ & $0$ & $1$ & {$2$} & $1$ & $2$ \\ \hline
\end{tabular}
\caption{Contiguity chain between $\pi_1|_{J_1}$ and $\pi_2|_{J_1}$ for $J_1$ in Figure~\ref{s1xs1}.
\label{tab:cuadro3}}
\end{table}

\begin{remark}{\em
Working instead with $\bsd(K\times K)$, \emph{OptimizedCovering} generates a system of piecewise linear motion planners using parameters $M=20000$ and $r=0.1$ for \emph{LocalSearch}. A detailed report of the resulting pair of piecewise linear domains and corresponding contiguity chains can be found in the Master's thesis of the first author. Despite the computer effort needed for such calculations, the output only gives that the optimal~(\ref{goalinicial}) can be attained though contiguity chains of length at most~26.
}\end{remark}

\subsection{A wedge of two circles}
Consider the 1-dimensional complex $K$ depicted in the following figure: 

$$\begin{tikzpicture}[x=.6cm,y=.6cm]
\draw(0,0)--(3,0); \draw(0,0)--(0,3);
\draw(0,3)--(3,3); \draw(3,0)--(3,3);
\draw(0,0)--(3,3);
\node [below left] at (0,0) {\scriptsize$0$};
\node [below right] at (3,0) {\scriptsize$2$};
\node [above left] at (0,3) {\scriptsize$3$};
\node [above right] at (3,3) {\scriptsize$1$};
\end{tikzpicture}$$
The geometric realization $\|K\|$ is homotopic to a wedge of two circles. As in the case of a single circle, \emph{OptimizedCovering} recovers the well known equality $\TC(\|K\|)=2$ without requiring subdivisions. The $25$-minutes calculation was accomplished using parameters $r=0.1$ and $M=5000$ for \emph{LocalSearch}. The three resulting optimal piecewise linear local domains $J_0, J_1,J_2$ are generated by the following list of facets where, for simplicity, a vertex $(i,j)$ of the ordered product $K\times K$ is labelled as $4i+j$:

\begin{itemize}
\item[$J_0$:] $\{5, 9, 10\},\{0, 3, 15\},\{0, 4, 7\},\{4, 8, 11\},\{0, 4, 6\},\{5, 13, 15\},\{0, 3, 7\},$ 

$\{0, 2, 10\},\{0, 2, 6\},\{0, 4, 5\},\{0, 12, 14\},\{1, 13, 15\},\{0, 12, 15\},
\{4, 5, 9\},$

$\{4, 8, 9\},\{4, 5, 13\},\{4, 7, 11\},\{1, 3, 15\},\{0, 2, 14\},\{1, 3, 7\}.$

\item[$J_1$:] $\{4, 7, 15\},\{0, 1, 9\},\{1, 2, 6\},\{0, 1, 13\},\{4, 12, 13\},\{5, 7, 15\},\{1, 9, 10\},$

$\{1, 13, 14\},\{0, 1, 5\},\{1, 5, 7\},\{4, 12, 15\},\{0, 8, 9\},\{1, 5, 6\},\{0, 3, 11\},$

$\{0, 12, 13\},\{0, 8, 11\},\{1, 2, 10\},\{1, 2, 14\}$.

\item[$J_2$:] $\{4, 12, 14\},\{0, 8, 10\},\{1, 9, 11\},\{1, 3, 11\},\{5, 13, 14\},\{4, 6, 10\},\{4, 6, 14\},$

$\{5, 6, 10\},\{5, 6, 14\},\{4, 8, 10\},\{5, 7, 11\},\{5, 9, 11\}$.
\end{itemize}

\begin{table}[H]
	\begin{centering}
		\subfloat{\centering{}{%
\begin{tabular}{|c|c|c|c|c|c|c|c|c|c|c|c|c|c|c|c|c|} \hline
	& $0$ & $1$ & $2$ & $3$ & $4$ & $5$ & $6$
	& $7$ & $8$ & $9$ & $10$ & $11$ & $12$ &
	$13$ & $14$ & $15$ \\ \hline
	$\varphi_{0}=\pi_1|_{J_0}$ & $0$ & $0$ & $0$ & $0$ & $1$ & $1$
	& $1$ & $1$ & $2$ & $2$ & $2$ & $2$ & $3$
	& $3$ & $3$ & $3$ \\ \hline
	$\varphi_{1}$ & $0$ & $0$ & $0$ & $0$ & $1$ & $1$
	& $1$ & $1$ & $1$ & $1$ & $2$ & $2$ & $0$
	& $3$ & $0$ & $3$ \\ \hline
	$\varphi_{2}$ & $0$ & $0$ & $0$ & $0$ & $1$ & $1$
	& $0$ & $1$ & $1$ & $1$ & $2$ & $1$ & $0$
	& $3$ & $2$ & $3$ \\ \hline
	$\varphi_{3}$ & $0$ & $0$ & $2$ & $0$ & $1$ & $1$
	& $0$ & $0$ & $0$ & $1$ & $2$ & $1$ & $0$
	& $3$ & $0$ & $3$ \\ \hline
	$\varphi_{4}$ & $0$ & $3$ & $2$ & $3$ & $1$ & $1$
	& $0$ & $0$ & $0$ & $1$ & $2$ & $0$ & $0$
	& $3$ & $2$ & $3$ \\ \hline
	$\varphi_{5}$ & $0$ & $3$ & $2$ & $3$ & $1$ & $1$
	& $0$ & $0$ & $0$ & $1$ & $2$ & $0$ & $0$
	& $1$ & $2$ & $3$ \\ \hline
	$\varphi_{6}$ & $0$ & $3$ & $2$ & $3$ & $0$ & $1$
	& $0$ & $0$ & $0$ & $1$ & $2$ & $0$ & $0$
	& $1$ & $2$ & $3$ \\ \hline
	$\varphi_{7}$ & $0$ & $3$ & $2$ & $3$ & $0$ & $1$
	& $0$ & $3$ & $0$ & $1$ & $2$ & $0$ & $0$
	& $1$ & $2$ & $3$ \\ \hline
	$\varphi_{8}=\pi_2|_{J_0}$ & $0$ & $1$ & $2$ & $3$ & $0$ & $1$
	& $2$ & $3$ & $0$ & $1$ & $2$ & $3$ & $0$
	& $1$ & $2$ & $3$ \\ \hline
\end{tabular}}}
		\par\end{centering}
	\caption{\label{hjjsls}Contiguity chain on $J_0$.}
\end{table}

The corresponding contiguity chains are described in Tables~\ref{hjjsls}--\ref{jjyststs}.

\begin{table}[H]
	\begin{centering}
		\subfloat{\centering{}{%
\begin{tabular}{|c|c|c|c|c|c|c|c|c|c|c|c|c|c|c|c|c|} \hline
	& $0$ & $1$ & $2$ & $3$ & $4$ & $5$ & $6$
	& $7$ & $8$ & $9$ & $10$ & $11$ & $12$ &
	$13$ & $14$ & $15$ \\ \hline
	$\varphi_{0}=\pi_1|_{J_1}$ & $0$ & $0$ & $0$ & $0$ & $1$ & $1$
	& $1$ & $1$ & $2$ & $2$ & $2$ & $2$ & $3$
	& $3$ & $3$ & $3$ \\ \hline
	$\varphi_{1}$ & $0$ & $0$ & $0$ & $0$ & $1$ & $1$
	& $1$ & $1$ & $0$ & $2$ & $2$ & $0$ & $3$
	& $3$ & $3$ & $3$ \\ \hline
	$\varphi_{2}$ & $0$ & $0$ & $0$ & $1$ & $3$ & $1$
	& $0$ & $1$ & $0$ & $2$ & $2$ & $1$ & $3$
	& $3$ & $3$ & $3$ \\ \hline
	$\varphi_{3}$ & $0$ & $0$ & $0$ & $0$ & $3$ & $1$
	& $0$ & $1$ & $0$ & $2$ & $2$ & $1$ & $0$
	& $0$ & $0$ & $3$ \\ \hline
	$\varphi_{4}$ & $0$ & $0$ & $2$ & $0$ & $3$ & $1$
	& $0$ & $1$ & $0$ & $0$ & $0$ & $0$ & $0$
	& $0$ & $2$ & $3$ \\ \hline
	$\varphi_{5}$ & $0$ & $0$ & $0$ & $3$ & $3$ & $1$
	& $0$ & $1$ & $0$ & $0$ & $0$ & $3$ & $0$
	& $0$ & $0$ & $3$ \\ \hline
	$\varphi_{6}$ & $0$ & $1$ & $0$ & $3$ & $3$ & $1$
	& $0$ & $1$ & $0$ & $1$ & $0$ & $3$ & $0$
	& $0$ & $0$ & $3$ \\ \hline
	$\varphi_{7}$ & $0$ & $1$ & $0$ & $3$ & $3$ & $1$
	& $0$ & $3$ & $0$ & $1$ & $0$ & $3$ & $0$
	& $0$ & $0$ & $3$ \\ \hline
	$\varphi_{8}$ & $0$ & $1$ & $0$ & $3$ & $0$ & $1$
	& $0$ & $3$ & $0$ & $1$ & $1$ & $3$ & $0$
	& $0$ & $0$ & $3$ \\ \hline
	$\varphi_{9}$ & $0$ & $1$ & $1$ & $3$ & $0$ & $1$
	& $0$ & $3$ & $0$ & $1$ & $1$ & $3$ & $0$
	& $1$ & $1$ & $3$ \\ \hline
	$\varphi_{10}$ & $0$ & $1$ & $1$ & $3$ & $0$ & $1$
	& $1$ & $3$ & $0$ & $1$ & $2$ & $3$ & $0$
	& $1$ & $2$ & $3$ \\ \hline
	$\varphi_{11}=\pi_2|_{J_1}$ & $0$ & $1$ & $2$ & $3$ & $0$ & $1$
	& $2$ & $3$ & $0$ & $1$ & $2$ & $3$ & $0$
	& $1$ & $2$ & $3$ \\ \hline
\end{tabular}}}
		\par\end{centering}
	\caption{Contiguity chain on $J_1$.}
\end{table}

\begin{table}[H]
	\begin{centering}
		\subfloat{\centering{}{%
\begin{tabular}{|c|c|c|c|c|c|c|c|c|c|c|c|c|c|c|} \hline
	& $0$ & $1$ & $3$ & $4$ & $5$ & $6$ & $7$
	& $8$ & $9$ & $10$ & $11$ & $12$ & $13$ &
	$14$ \\ \hline
	$\varphi_{0}=\pi_1|_{J_2}$ & $0$ & $0$ & $0$ & $1$ & $1$ & $1$
	& $1$ & $2$ & $2$ & $2$ & $2$ & $3$ & $3$
	& $3$ \\ \hline
	$\varphi_{1}$ & $0$ & $0$ & $0$ & $1$ & $1$ & $1$
	& $1$ & $2$ & $2$ & $2$ & $2$ & $3$ & $1$
	& $1$ \\ \hline
	$\varphi_{2}$ & $2$ & $0$ & $0$ & $1$ & $1$ & $2$
	& $1$ & $2$ & $2$ & $2$ & $2$ & $3$ & $1$
	& $1$ \\ \hline
	$\varphi_{3}$ & $1$ & $2$ & $0$ & $1$ & $1$ & $2$
	& $1$ & $2$ & $2$ & $2$ & $2$ & $1$ & $1$
	& $1$ \\ \hline
	$\varphi_{4}$ & $1$ & $2$ & $2$ & $1$ & $1$ & $2$
	& $2$ & $2$ & $1$ & $2$ & $2$ & $2$ & $1$
	& $2$ \\ \hline
	$\varphi_{5}$ & $1$ & $2$ & $1$ & $1$ & $1$ & $2$
	& $1$ & $1$ & $1$ & $2$ & $1$ & $2$ & $1$
	& $2$ \\ \hline
	$\varphi_{6}$ & $1$ & $1$ & $1$ & $1$ & $1$ & $2$
	& $3$ & $1$ & $1$ & $2$ & $1$ & $2$ & $1$
	& $1$ \\ \hline
	$\varphi_{7}$ & $2$ & $3$ & $3$ & $2$ & $1$ & $2$
	& $3$ & $2$ & $1$ & $2$ & $3$ & $1$ & $1$
	& $2$ \\ \hline
	$\varphi_{8}$ & $0$ & $1$ & $3$ & $2$ & $1$ & $1$
	& $3$ & $0$ & $1$ & $2$ & $3$ & $2$ & $1$
	& $2$ \\ \hline
	$\varphi_{9}$ & $0$ & $1$ & $3$ & $2$ & $1$ & $2$
	& $3$ & $0$ & $1$ & $2$ & $3$ & $0$ & $1$
	& $2$ \\ \hline
	$\varphi_{10}=\pi_2|_{J_2}$ & $0$ & $1$ & $3$ & $0$ & $1$ & $2$
	& $3$ & $0$ & $1$ & $2$ & $3$ & $0$ & $1$
	& $2$ \\ \hline
\end{tabular}}}
		\par\end{centering}
	\caption{\label{jjyststs}Contiguity chain on $J_2$.}
\end{table}


\begin{thebibliography}{10}

\bibitem{MR3124944}
A.J. Blumberg and M.A. Mandell.
\newblock Quantitative homotopy theory in topological data analysis.
\newblock {\em Found. Comput. Math.}, 13(6):885--911, 2013.

\bibitem{SDboratetall}
A.~Borat, M.~Pamuk, and T.~Vergili.
\newblock Contiguity distance between simplicial maps.
\newblock {\em arXiv:2012.10627 [math.AT]}.

\bibitem{ES52}
S.~Eilenberg and N.~Steenrod.
\newblock {\em Foundations of algebraic topology}.
\newblock Princeton University Press, Princeton, New Jersey, 1952.

\bibitem{F2}
M.~Farber.
\newblock Topological complexity of motion planning.
\newblock {\em Discrete Comput. Geom.}, 29(2):211--221, 2003.

\bibitem{F}
M.~Farber.
\newblock {\em Invitation to topological robotics}.
\newblock Zurich Lectures in Advanced Mathematics. European Mathematical
  Society (EMS), Z\"urich, 2008.

\bibitem{MR3834677}
D.~Fern\'{a}ndez-Ternero, E.~Mac\'{\i}as-Virg\'{o}s, E.~Minuz, and J.~A.
  Vilches.
\newblock Discrete topological complexity.
\newblock {\em Proc. Amer. Math. Soc.}, 146(10):4535--4548, 2018.

\bibitem{MR3778506}
J.~Gonz\'{a}lez.
\newblock Simplicial complexity: piecewise linear motion planning in robotics.
\newblock {\em New York J. Math.}, 24:279--292, 2018.

\bibitem{HD}
E.~Mac\'ias-Virg\'os and D.~Mosquera-Lois.
\newblock Homotopic distance between maps.
\newblock {\em arXiv:1810.12591 [math.AT]}. To appear in
\emph{Math.~Proc.~Cambridge Philos.~Soc.}

\bibitem{Spanier}
E.H. Spanier.
\newblock {\em Algebraic topology}.
\newblock McGraw-Hill Book Co., New York-Toronto, Ont.-London, 1966.

\bibitem{MR3773738}
K.~Tanaka.
\newblock A combinatorial description of topological complexity for finite
  spaces.
\newblock {\em Algebr. Geom. Topol.}, 18(2):779--796, 2018.

\end{thebibliography}

\bigskip

\noindent
{\sc Escuela Superior de F\'isica y Matem\'aticas del Instituto Polit\'ecnico Nacional. Edificio 9, Unidad Profesional Adolfo L\'opez Mateos, 07300, Mexico City, Mexico.

\noindent {\tt arweyo@gmail.com, alaral@ipn.mx.}}

\medskip\noindent
{\sc Departamento de Matem\'aticas, Centro de Investigaci\'on y de Estudios Avanzados del Istituto Polit\'ecnico Nacional, Av.~IPN 2508, Zacatenco, M\'exico City 07000, M\'exico.

\noindent {\tt jesus@math.cinvestav.mx}}

\medskip\noindent
{\sc Department of Mathematics, Faculty of Engineering and Natural Sciences, Bursa Technical University, Bursa, Turkey.

\noindent {\tt ayse.borat@btu.edu.tr}}

\end{document}